\newcommand{\aS}{\mathfrak{S}}
\def\la{\lambda}
\def\al{\alpha}
\def\ov{\overline}
\def\rP{{\text {\rm P} } }
\newcommand{\sgn}{\textup{sign}}
\newcommand{\sort}{\textup{sort}}
\newcommand{\join}{\textup{\textsf{join}}}
\newcommand{\setpartition}{\Gamma}
\newcommand{\id}{\textup{id}}
\newcommand{\IZ}{\mathbb{Z}}
\newcommand{\ComCla}[1]{\textup{\textsf{#1}}}
\newcommand{\sharpP}{\ComCla{\#P}}
\newcommand{\PH}{\ComCla{PH}}
\newcommand{\CeqP}{\ComCla{C$_=$P}}
\newcommand{\coCeqP}{\ComCla{coC$_=$P}}
\newcommand{\Sigmap}{\ensuremath{\Sigma^{{\textup{p}}}}}
\renewcommand{\P}{\ComCla{P}}
\newcommand{\GapP}{\ComCla{GapP}}
\newcommand{\NP}{\ComCla{NP}}
\newcommand{\PP}{\ComCla{PP}}
\newcommand{\coNP}{\ComCla{coNP}}
\newcommand{\oP}{\ComCla{$\oplus$P}}
\newcommand{\sharpiDM}{\problem{\#$i$DM}}
\newcommand{\sharpIIDM}{\problem{\#2DM}}
\newcommand{\sharpIIIDM}{\problem{\#3DM}}
\newcommand{\sharpIVDM}{\problem{\#4DM}}
\newcommand{\mult}{\textup{mult}}
\newcommand{\problem}[1]{\mbox{{\normalfont\textsc{#1}}}} 
\renewcommand{\a}{\textup{\textbf{a}}}
\renewcommand{\b}{\textup{\textbf{b}}}
\renewcommand{\c}{\textup{\textbf{c}}}
\renewcommand{\d}{\textup{\textbf{d}}}
\DeclareMathOperator{\ind}{ind}
\def\nn{\mathbb N}
\def\IN{\mathbb N}
\def\cc{\mathbb C}
\DeclareMathOperator{\GL}{GL}
\def\cP{\mathcal P}
\numberwithin{equation}{section}
\newtheorem{conjecture}[equation]{Conjecture}
\newtheorem{theorem}[equation]{Theorem}
\newtheorem{proposition}[equation]{Proposition}
\newtheorem{lemma}[equation]{Lemma}
\newtheorem{ex}[equation]{Example}
\title[Positivity of the symmetric group characters is $\PH$-hard]{Positivity of the symmetric group characters  \\
is as hard as the polynomial time hierarchy}
\author[Christian~Ikenmeyer]{\ Christian~Ikenmeyer$^{\ddagger}$}
\author[Igor~Pak]{ \ Igor~Pak$^\star$}
\author[Greta~Panova]{ \ Greta~Panova$^{\dagger}$}
\date{\today}
\thanks{\thinspace ${\hspace{-.45ex}}^\ddagger$Department of Mathematics, 
University of Warwick, UK. 
\hskip.06cm Email: \hskip.06cm
\texttt{Christian.Ikenmeyer@warwick.ac.uk}}
\thanks{\thinspace ${\hspace{-.45ex}}^\star$Department of Mathematics,
UCLA, Los Angeles, CA~90095.
\hskip.06cm
Email:
\hskip.06cm
\texttt{pak@math.ucla.edu}}
\thanks{\thinspace ${\hspace{-.45ex}}^\dagger$Department of Mathematics,
 University of Southern California, Los Angeles, CA~90089.
\hskip.06cm Email: \hskip.06cm
\texttt{gpanova@usc.edu}}
\begin{document}

\begin{abstract} \small\baselineskip=9pt We prove that deciding the vanishing of the character of the symmetric group is $\CeqP$-complete. We use this hardness result to prove that the absolute value and also the square of the character are not contained in $\sharpP$, unless the polynomial hierarchy collapses to the second level.  This rules out the existence of any (unsigned) combinatorial description for the square of the characters.
As a byproduct of our proof we conclude that deciding positivity of the character is $\PP$-complete under many-one reductions, and hence $\PH$-hard under Turing-reductions.
\end{abstract}

\maketitle

\section{Introduction}
\subsection{Motivation}
\label{subsec:motivation}
Consider the following two classical identities from the representation theory of the symmetric group\footnote{The characters are the traces of the representation matrices, see e.g.~\cite[Ch.~2]{JK}.}:
\begin{equation}
\label{eq:RSK}
n! \ = \ \sum_{\la\vdash n} \, \big(\chi^\la(1)\big)^2 \ , \ \ \text{and}
\end{equation}
\begin{equation}
\label{eq:charsum}
n! \ = \ \sum_{\pi\in\aS_n} \big(\chi^\la(\pi)\big)^2 \quad\text{for all} \ \ \la\vdash n.
\end{equation}
Here $\chi^\la$ is the irreducible character of the symmetric group $\aS_n$ of the representation indexed by~$\la$,
and $\chi^\la(\pi)\in \IZ$ is its evaluation.  Both identities arise in a similar manner,
as squared norms of row and column vectors in the character table
of $\aS_n$, see~$\S$\ref{ss:finrem-White} for the context and generalizations.

Equalities such as these, are an invitation
for combinatorialists to search for natural bijections between the sets of
combinatorial objects counting both sides.  In both cases, the LHS is the
set $\aS_n$ of permutations of $n$ symbols.  For \eqref{eq:RSK}, the
RHS is the set of pairs of \emph{standard Young tableaux} of the same
shape with $n$ boxes.
The bijection between the set of permutations and the set of pairs of Young tableaux is
the celebrated \emph{Robinson--Schensted correspondence}, which is
fundamental in Algebraic Combinatorics, see \cite[Ch.~3]{Sag}
and \cite[$\S\S$7.11-14]{EC2}.  This correspondence has numerous
generalizations and is studied widely across many areas of mathematics
and applications, see e.g.\ \cite{And,BuS,DNV,KP,OC,OW}.

Similarly, for \eqref{eq:charsum}, one would want to give a bijection between $\aS_n$
and  a set of $n!$ many combinatorial objects that are partitioned naturally
into subsets of sizes $\big(\chi^\la(\pi)\big)^2$.
In this paper we prove that this approach would fail for the fundamental reason that
the RHS of~\eqref{eq:charsum} does not admit such an interpretation.
As the following theorem implies, it is unlikely that there exist
``sets of $\big(\chi^\la(\pi)\big)^2$ many combinatorial objects'' (see more on this below).

\begin{theorem}
\label{thm:main}
Let  $\chi^2 : (\la,\pi) \mapsto \big(\chi^\la(\pi)\big)^2$, where $\la\vdash n$ and $\pi \in \aS_n$.
If the function $\chi^2$ is contained in the complexity class $\sharpP$, then
$\coNP=\CeqP$.  Consequently\footnote{Indeed, Tarui (\cite{Tar91}, see also \cite{Gre93}) proves that $\ComCla{PH} \subseteq \NP^{\CeqP}$. Therefore, if $\coNP=\CeqP$, then $\PH \subseteq \NP^\CeqP = \NP^\coNP = \Sigmap_2$, and hence $\Sigmap_2=\PH$.}, if  $\chi^2 \in \sharpP$, then
the polynomial hierarchy collapses to the second level: $\PH = \Sigmap_2$.
\end{theorem}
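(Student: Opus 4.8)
The plan is to show the contrapositive via a chain of complexity-theoretic reductions, with the representation-theoretic content isolated in a single key fact: deciding whether $\chi^\la(\pi) = 0$ is $\CeqP$-complete (equivalently, deciding $\chi^\la(\pi)\neq 0$ is $\coCeqP$-complete). Granting this, suppose $\chi^2 \in \sharpP$, i.e.\ there is a polynomial-time nondeterministic Turing machine whose number of accepting paths on input $(\la,\pi)$ equals $\big(\chi^\la(\pi)\big)^2$. The immediate observation is that $\big(\chi^\la(\pi)\big)^2 = 0$ if and only if $\chi^\la(\pi) = 0$, so membership of $\chi^2$ in $\sharpP$ gives a $\sharpP$ function whose \emph{support} is exactly the set of $(\la,\pi)$ with $\chi^\la(\pi)\neq 0$. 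But the question ``is this $\sharpP$ function nonzero?'' is by definition an $\NP$ question. Hence $\{(\la,\pi): \chi^\la(\pi)\neq 0\} \in \NP$, i.e.\ the $\coCeqP$-complete problem lies in $\NP$, which forces $\coCeqP \subseteq \NP$, and therefore (taking complements) $\CeqP \subseteq \coNP$. Combined with the trivial inclusion $\coNP \subseteq \CeqP$ (a $\coNP$ predicate $\forall y\, R(x,y)$ can be written as ``the number of $y$ with $\neg R(x,y)$ equals $0$''), this yields $\coNP = \CeqP$, the first claim of the theorem.

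For the ``consequently'' part, I would simply invoke the quoted result of Tarui (and Green), $\PH \subseteq \NP^{\CeqP}$, exactly as spelled out in the footnote: from $\coNP = \CeqP$ we get $\NP^{\CeqP} = \NP^{\coNP} = \NP^{\NP} = \Sigmap_2$, so $\PH \subseteq \Sigmap_2$, and since $\Sigmap_2 \subseteq \PH$ always, the hierarchy collapses, $\PH = \Sigmap_2$. This step is purely formal relativization bookkeeping and needs no new idea.

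The main obstacle — and the part that must already be established earlier in the paper — is the $\CeqP$-completeness (really the $\CeqP$-hardness direction) of deciding $\chi^\la(\pi) = 0$. Membership in $\CeqP$ is easy: the Murnaghan--Nakayama rule, or more simply the fact that $\chi^\la(\pi)$ is a $\GapP$ function (a difference of two $\sharpP$ functions, e.g.\ counting rim-hook tableaux of each sign), shows that ``$\chi^\la(\pi)=0$'' is an instance of ``the difference of two $\sharpP$ functions is zero,'' which is precisely the defining form of $\CeqP$. The hardness direction requires encoding an arbitrary $\CeqP$ computation — say, deciding whether two given $\sharpP$ functions agree — into a character evaluation; one expects this to go through a combinatorial $\sharpP$-hard counting problem (along the lines of counting with signs, e.g.\ a signed version of $\sharpIIIDM$ or a determinant-style construction) and then realizing that signed count as $\chi^\la(\pi)$ via Jacobi--Trudi or Murnaghan--Nakayama type identities, possibly after padding $\la$ and $\pi$ to absorb gadgets. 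Everything in the present excerpt's theorem, however, is downstream of that completeness result, so the proof I would write here is the short reduction in the first two paragraphs: $\chi^2 \in \sharpP \Rightarrow$ the nonvanishing set is in $\NP \Rightarrow \coCeqP \subseteq \NP \Rightarrow \CeqP \subseteq \coNP \Rightarrow \coNP = \CeqP \Rightarrow \PH = \Sigmap_2$.
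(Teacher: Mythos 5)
Your proposal is correct and matches the paper's own argument: the paper proves this theorem by combining the $\CeqP$-completeness of $[\chi^\la(\pi)=0]$ (Theorem~\ref{thm:CeqPhardnessofchar}) with Proposition~\ref{pro:collapse}, whose proof is exactly your chain $\CeqP \subseteq \langle[f=0]\rangle = \ComCla{co}\langle[q(f)>0]\rangle \subseteq \coNP$, followed by the Tarui/Green collapse in the footnote. You have correctly identified that all the real work lies in the hardness result established elsewhere in the paper, and your reduction from there to the stated theorem is the same as the paper's.
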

The assumption $\PH \neq \Sigmap_2$ in the theorem is a widely believed standard complexity theoretic assumption.
From a combinatorial perspective, \Cref{thm:main} is much stronger than just saying
that the character squares are hard to compute. The theorem rules out that there
exists any positive combinatorial interpretation for the character squares,
even if ``positive combinatorial interpretation'' is interpreted in the widest
 possible sense.
Large parts of Enumerative and Algebraic Combinatorics deal with finding
explicit (positive) combinatorial interpretations of quantities,
while impossibility results such as \Cref{thm:main} are extremely rare,
see \S\ref{subsec:relwork}.

Note also how close the upper and lower bounds are.  Recall that the character square is
in $\GapP=\sharpP-\sharpP$, is always nonnegative, and yet is not in~$\sharpP$ by
the theorem unless the polynomial hierarchy collapses.  Our proof goes via showing that deciding the vanishing of $\chi^\la(\pi)$
is $\CeqP$-complete:

\begin{theorem}
\label{thm:CeqPhardnessofchar}
The language $\{(\la,\pi) \mid \chi^\la(\pi)=0\}$ is
$\CeqP$-complete under many-one reductions.
\end{theorem}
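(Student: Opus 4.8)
The plan is to establish membership in $\CeqP$ easily and then devote the main effort to the hardness direction. For membership, recall that $\chi^\la(\pi)$ lies in $\GapP$: the Murnaghan--Nakayama rule (or the Frobenius formula together with the Jacobi--Trudi determinant) expresses $\chi^\la(\pi)$ as a difference of two $\sharpP$ functions, both computable from the input $(\la,\pi)$ given in standard (say, binary or unary) encoding. A function in $\GapP$ vanishes exactly when its two $\sharpP$ halves are equal, and ``is a given $\GapP$ function zero'' is by definition (one of the equivalent definitions of) a $\CeqP$ predicate. So $\{(\la,\pi)\mid \chi^\la(\pi)=0\}\in\CeqP$, modulo checking that the standard polynomial-time witness-counting machines are genuinely polynomial in the input size; this is routine since $\ell(\la)$ and the number of parts of the cycle type of $\pi$ are polynomial in $n$, and the Murnaghan--Nakayama recursion has polynomially bounded depth with a polynomial branching description.

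For hardness, I would reduce a known $\CeqP$-complete problem to character vanishing. The canonical choice is $\problem{EQ-}\sharpP$ (or equivalently the question whether two instances of a counting problem have the same number of solutions), but it is cleaner to start from a combinatorial $\CeqP$-complete problem whose objects already resemble the combinatorics of $\chi^\la$. A natural candidate is built from $\sharpIIIDM$ (counting perfect matchings in tripartite $3$-uniform hypergraphs) or from permanent-type problems: deciding whether two such counts are equal is $\CeqP$-complete. The key step is then to encode an \emph{arbitrary} integer that is the difference of two such counts as a \emph{single} character value $\chi^\la(\pi)$ for explicitly constructed $\la=\la(x)$ and $\pi=\pi(x)$, computable in polynomial time from the input $x$. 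Here one exploits that characters of the symmetric group are, via Murnaghan--Nakayama, signed counts of ribbon (border-strip) tableaux, so negative contributions are already ``built in''; the task is to design a shape $\la$ and a cycle type $\pi$ so that the signed border-strip count equals the target GapP value on the nose.

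The main obstacle — and the technical heart of the proof — is this last encoding step: controlling the sign pattern in the Murnaghan--Nakayama expansion so that a prescribed $\GapP$ quantity is realized, rather than some uncontrolled mixture. I expect the right tool is to use a cycle type $\pi$ with a single very long cycle (or a few long cycles) together with a hook-like or staircase-like shape, so that the set of border-strip tableaux of the given content is in bijection with the combinatorial objects being counted, and the sign $(-1)^{\mathrm{ht}}$ of each strip is forced to be constant (or two-valued matching the two $\sharpP$ halves). An alternative, possibly cleaner route is to go through a known hardness result for Kronecker or plethysm-type coefficients and then use a character identity to pull it back to a single character evaluation; but the direct Murnaghan--Nakayama construction is more likely to give the tight $\CeqP$ bound. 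Once the encoding is in place, $\chi^\la(\pi)=0$ holds if and only if the two counts agree, completing the many-one reduction, and combining the two directions yields $\CeqP$-completeness.
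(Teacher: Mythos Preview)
Your membership argument is fine and matches the paper. The hardness outline has the right shape---reduce equality of two $\sharpP$ counts to a single character vanishing---but the part you flag as ``the technical heart'' is not just unfinished, it points in a direction that does not work as stated. If $\pi$ is a single $n$-cycle, then by Murnaghan--Nakayama $\chi^\la(\pi)\in\{-1,0,1\}$ (nonzero only for hooks), so you cannot encode an arbitrary $\GapP$ value this way; ``a few long cycles'' with hook- or staircase-like $\la$ still gives very constrained values, and nothing in your sketch explains how to force the ribbon tableaux to split into exactly two sign classes that count your two $\sharpP$ instances. The alternative route through Kronecker or plethysm coefficients does not obviously yield a $\CeqP$ lower bound for a \emph{single} character evaluation either.

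The paper's mechanism is quite different and worth knowing. It uses the Frobenius/Jacobi--Trudi expression
\[
\chi^\la(\alpha)=\sum_{\sigma\in\aS_\ell}\sgn(\sigma)\,\rP(\alpha,\la+\sigma-\id),
\]
where $\rP$ counts ordered set partitions (bin packings). The trick is to take $\la=p\cdot\b$ and $\alpha=p\cdot\a+e_1-e_2$ with $p=\ell+1$: divisibility modulo $p$ then kills every $\sigma$ except the identity and a single adjacent transposition, so $\chi^\la(\alpha)$ becomes a \emph{difference of exactly two} set-partition counts. The two counts are then engineered, via a chain of parsimonious reductions $\problem{\#CircuitSAT}\to\sharpIIIDM$, a ``join'' of two $\sharpIIIDM$ instances into one $\sharpIVDM$ instance, and a Garey--Johnson style $\sharpIVDM\to\problem{\#SetPartition}$ encoding with a few extra small items, to equal (up to a common positive factor) the two original $\sharpP$ values $F(w)$ and $F'(w)$. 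None of this goes through Murnaghan--Nakayama; the sign control comes entirely from the modular collapse of the Jacobi--Trudi sum.
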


\Cref{thm:main} then follows from \Cref{thm:CeqPhardnessofchar} and \Cref{pro:collapse}.
The result in the title is a direct consequence of the reduction in
the proof of   Theorem~\ref{thm:CeqPhardnessofchar}.

\begin{theorem}
\label{thm:PH-char}
The language $L=\{(\la,\pi) \mid \chi^\la(\pi) \ge 0\}$ is $\PP$-complete under many-one reductions.
Consequently, $L$ is $\PH$-hard under Turing-reductions.
\end{theorem}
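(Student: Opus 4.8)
\emph{Proof plan.}
The plan is to show $L\in\PP$, then that $L$ is $\PP$-hard under many-one reductions, and finally to deduce $\PH$-hardness under Turing reductions from Toda's theorem. For membership, recall (e.g.\ from the Murnaghan--Nakayama rule, which expresses $\chi^\la(\pi)=\sum_{T}(-1)^{\textup{ht}(T)}$ as a signed sum over polynomial-time recognizable border-strip tableaux $T$ of shape $\la$ and content the cycle type of $\pi$) that $\chi^\la(\pi)$ is a $\GapP$ function of the pair $(\la,\pi)$. Since $2\ts\chi^\la(\pi)+1$ is then also in $\GapP$ and $\chi^\la(\pi)\ge 0$ if and only if $2\ts\chi^\la(\pi)+1>0$, the standard characterization $\PP=\{A : \exists g\in\GapP \text{ with } x\in A \Leftrightarrow g(x)>0\}$ immediately places $L$ in $\PP$.

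For hardness, I would not build a new reduction but reuse the one already constructed in the proof of \Cref{thm:CeqPhardnessofchar}. As explained there, that reduction sends an instance $x$ of a $\CeqP$-complete problem, witnessed by a $\GapP$-complete function $f$ (so the problem is $\{x : f(x)=0\}$), to a pair $(\la,\pi)$, computable in polynomial time, for which $\chi^\la(\pi)$ equals $f(x)$ up to a fixed sign and a fixed positive factor, neither of which matters below; vanishing of $\chi^\la(\pi)$ then encodes $f(x)=0$, which is why it is $\CeqP$-complete. Now observe that the \emph{nonnegativity} of $f$, i.e.\ the language $\{x : f(x)\ge 0\}$, is $\PP$-complete under many-one reductions: it is in $\PP$ by the characterization above, and it is $\PP$-hard because every $A\in\PP$ can be written as $\{x : h_A(x)\ge 0\}$ with $h_A\in\GapP$, and every $\GapP$ function reduces in polynomial time to the generic (complete) one. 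Composing a many-one reduction from this $\PP$-complete problem with the construction of \Cref{thm:CeqPhardnessofchar} gives a many-one reduction to $L$, since $x$ is a yes-instance iff $f(x)\ge 0$ iff $\chi^\la(\pi)\ge 0$ iff $(\la,\pi)\in L$. Together with the membership above, this proves $L$ is $\PP$-complete.

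Finally, $\PH$-hardness under Turing reductions is formal: Toda's theorem gives $\PH\subseteq\P^{\PP}$, and since $L$ is $\PP$-complete under many-one reductions we have $\P^{\PP}=\P^{L}$, so every language in $\PH$ is polynomial-time Turing-reducible to $L$.

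I expect the only delicate point to be the bookkeeping deferred to \Cref{thm:CeqPhardnessofchar}: one must check that its reduction is \emph{sign-faithful}, meaning the character value it produces genuinely equals the target $\GapP$-complete quantity (up to a controlled sign and positive scaling), not merely an integer that happens to vanish precisely when that quantity does. A reduction of the natural shape $\chi^\la(\pi)=\#(\text{positively signed objects})-\#(\text{negatively signed objects})$ realizing an arbitrary $\GapP$ pair of counts has this property automatically, and this is exactly the kind of reduction the proof of \Cref{thm:CeqPhardnessofchar} supplies, so the positivity result follows with no extra work.
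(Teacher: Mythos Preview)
Your proposal is correct and is essentially the paper's own approach: the paper proves Theorems~\ref{thm:CeqPhardnessofchar} and~\ref{thm:PH-char} in one stroke by fixing a comparison operator $\sim\,\in\{=,\ge\}$ and verifying that each step of the reduction chain preserves the relation $F(w)-F'(w)\sim 0$, which is precisely your ``sign-faithfulness'' observation (concretely, $\chi^\la(\al)=\delta\cdot\big(F(w)-F'(w)\big)$ with $\delta>0$, though $\delta$ depends on the instance rather than being a fixed constant). Membership in $\PP$ is handled there via $\PP=[\GapP\ge 0]$ directly, without your $2\chi+1$ detour, and the $\PH$-hardness consequence via Toda is derived exactly as you do.
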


Indeed, since $\PH \subseteq \P^\PP$ by \cite{Tod89,Tod91},
it immediately follows that $L$ is $\PH$-hard under Turing reductions:
$\PH \subseteq \P^\PP \, \stackrel{\textup{Thm.~\ref{thm:PH-char}}}{\, \subseteq} \P^{\P^L} = \P^L$.
This derives the second part of the theorem from the first part.
As a side result we prove that computing the character is strongly $\GapP$-complete, see Theorem~\ref{thm:strongGapPhard}.

\subsection{$\sharpP$, $\GapP$ and combinatorial interpretations}
\label{subsec:sharpPGapP}
Let $\{0,1\}^*$ denote the set of finite length sequences of zeros and ones.
The length $|w|$ of a bit string $w\in\{0,1\}^*$ is defined as the number of its symbols.
For a set $S$ let $2^S$ be its power set, i.e., the set of all subsets of~$S$.

The class $\sharpP$ is commonly defined via nondeterministic Turing machines as follows, but we discuss another definition below.
For a nondeterministic Turing machine $M$ and a word $w \in \{0,1\}^*$ let $\textup{acc}_M(w)$ denote the number of accepting computation paths of $M$ on input~$w$.
The complexity class $\sharpP$ is defined as the class of those functions $f:\{0,1\}^*\to\IN$ for which a nondeterministic Turing machine $M$ exists with $\forall w \in \{0,1\}^*: f(w)=\textup{acc}_M(w)$.

There is no generally accepted definition of a \emph{positive combinatorial interpretation} of a function $f:\{0,1\}^*\to\IN$,
but intuitively the existence of a positive combinatorial interpretation of $f$ should mean the existence of a map $\varphi$ that assigns to each $w\in\{0,1\}^*$ a set $\varphi(w)$ whose cardinality is $f(w)$. For example, as in \eqref{eq:RSK}, $f(\la)=(\chi^\la(1))^2$, 
$$\varphi(\la)=\{(T,S)\mid T,S \textup{ semistandard tableaux of shape $\la$ with $|\la|$ boxes}\}.$$
But some additional properties on $\varphi$ are required to get a useful definition. Otherwise one could always define the trivial
\begin{equation}
\label{eq:trivial}
\varphi(w)=\{1,2,3,\ldots,f(w)\}.
\end{equation}
A meaningful definition that covers \eqref{eq:RSK} and also covers well-known cases from algebraic combinatorics such as for example the Littlewood-Richardson coefficient is the class $\sharpP$. In $\sharpP$ we impose two restrictions on $\varphi$. First, the description length of the elements in $\varphi(w)$ must be bounded by a polynomial in $|w|$, and second, there must be a polynomial time algorithm $V$ that on input $(w,x)$ decides whether or not $x \in \varphi(w)$. More precisely,
$f \in\sharpP$ if and only if there exists a univariate polynomial $t$ and
a polynomial time algorithm $V:\{0,1\}^*\times \{0,1\}^*\to\{0,1\}$ such that
\[
\forall w \in \{0,1\}^* : \quad f(w) = \sum_{\substack{x\in \{0,1\}^*\\|x|\leq t(|w|)}} V(w,x).
\]
The restriction of the computational resources of $V$ eliminates the problem in \eqref{eq:trivial}, at least for all $f$ that are hard to compute (which is the case in this paper).
Note that "polynomial computation time" is a robust notion that is independent of any formalization of (non-quantum) computation, and also works for Turing machines, which
Turing compares to a human ``computer'' in \cite[\S9]{Tur36}.
This point is excellently explained in \cite[Ch.~1]{AB09}.
A restriction on the description length of the elements of $\varphi(w)$ is also important, because otherwise for every computable function $f$ we can choose $\varphi(w)=\{(T_w,i) \mid 1 \leq i \leq f(w)\}$, where $T$ is a transcript of the performed computation steps when computing $f(w)$, which is an issue very similar to \eqref{eq:trivial}.
Slightly weaker version of both issues persist if ``unbounded'' is replaced by ``exponentially bounded''.
One could make the restrictions stronger than ``polynomially bounded'' to get subclasses of $\sharpP$, but our results already show non-containment in $\sharpP$.
Considering a growth behavior between polynomial and exponential is also possible, but it is not so clear how natural those classes are.

For example, the famous \emph{Littlewood--Richardson rule} states that the Littlewood--Richardson (LR) coefficient $c_{\la,\mu}^\nu$ equals the number of LR--tableaux of skew shape $\nu/\la$ and content $\mu$, hence the map $(\la,\mu,\nu)\mapsto c_{\la,\mu}^\nu$ is in $\sharpP$.
Here we already see an interesting subtlety: This argument works if the partitions are given as their Young diagrams, i.e., the partitions are given in unary, because otherwise writing down a single LR-tableau would require exponential space. The LR-coefficient is in $\sharpP$ for binary inputs, see e.g.\ \cite{Nar}, which follows from their interpretation as the number of integer points in a certain polytope, and not the LR-tableaux.
From the perspective of combinatorics, a ``combinatorial interpretation'' of the Littlewood--Richardson coefficient already follows from the former result. \Cref{thm:main} works in unary and hence also in binary.

Let us also remark that $\sharpP$ is the class of positive combinatorial interpretations if ``positive combinatorial interpretation'' is used in a very broad and all-encompassing sense. This means that a proof of the non-membership in $\sharpP$ such as \Cref{thm:main} is a very strong impossibility result, as it rules out also very complicated tableau constructions, including, e.g.,
those in \cite{Bla17,TY}.


The complexity class $\GapP := \sharpP-\sharpP$ is defined as the class of differences of
two $\sharpP$ functions, i.e., $\GapP = \{f-g \mid f,g\in\sharpP\}$.
Let $\GapP_{\geq 0}$ denote the subset of nonnegative functions in $\GapP$.
Many interesting functions in algebraic combinatorics are known to be in $\GapP_{\geq 0}$,
but conjectured to be in $\sharpP$.  See \cite{IP22,Pak19} for many such functions
arising from combinatorial inequalities.
The most famous  $\GapP_{\geq 0}$ functions are the subject of
of Stanley's survey \cite{Sta00} on positivity problems in algebraic combinatorics,
where he asked for positive combinatorial interpretations of the plethysm, Kronecker,
and Schubert coefficients. All these problems remain unresolved
(cf.~$\S$\ref{ss:finrem-Kron}).

Closer to the subject of this paper, Stanley considered
rows and column sums of the character table of $\aS_n$:
\begin{equation}\label{eq:row-column-sums}
a_\la  :=  \sum_{\mu\vdash n}  \chi^\la(\mu) \qquad \text{and}
\qquad b_\la  := \sum_{\mu\vdash n}  \chi^\mu(\la),
\end{equation}
respectively, see Problem~12 in~\cite{Sta00} for references for the nonnegativity of $a_\la$ and $b_\la$. Here $\chi^\la(\mu)$ denotes
the character value on permutations of cycle type~$\mu$.   Viewed as functions
with unary input, it is easy to see that $a_\la$ and $b_\la$ are in $\GapP_{\geq 0}$.
Stanley notes that
$b_\la = \bigl|\{\omega\in \aS_n \mid \omega^2=\sigma\}\bigr|$, where
$\sigma$ has cycle type~$\la$, which implies that $b_\la$ is in $\sharpP$.
Stanley asked for a positive combinatorial interpretation of~$a_\la$, which
remains an open problem (cf.~$\S$\ref{ss:finrem-Kron}).
\Cref{thm:main} could be seen as a critical reminder that there is the possibility
that the desired combinatorial interpretations might not exist.

\subsection{Related work}
\label{subsec:relwork}
The amount of work on characters of the symmetric groups is much too large to
be reviewed here, but let us note that they prominently appear in other
fields, see e.g.\ \cite{Dia,Pau,Ste}, and  have remarkable applications, see e.g.\  
\cite{EFP,MRS}.  On the other hand, the asymptotic proportion of
zeros in the character table remains open, see complementary discussions of the same
data in \cite[$\S$1.2]{Mil} and \cite[$\S$8.5]{PPV}.

Hepler \cite{Hep} proved that the computation of $\chi^\la(\pi)$ is $\sharpP$-hard under many-one reductions.
He does not study the vanishing problem of $\chi^\la(\pi)$.
The vanishing of the character $\chi^\la(\mu)$ was proved to be $\NP$-hard in \cite{PP_comp}.
It is noteworthy that the result in \cite{PP_comp} only holds for the problem where the input $(\la,\mu)$
is encoded in binary, i.e., instead of $\pi$ the second parameter is just the cycle type $\mu$ in binary.
Our results do not have such a restriction.
The relativizing closure properties of $\sharpP$ have been characterized in \cite{HVW95},
which can be generalized to prove non-containment in $\sharpP$ w.r.t.\ an oracle in several settings,
see \cite{IP22}.

In the combinatorics literature, the notion of a ``positive combinatorial
interpretation'' is used informally; these are also called \emph{manifestly positive}
combinatorial formulas, rules, expressions, etc.  This is to emphasize the
importance of positivity, as opposed to \emph{signed combinatorial formulas},
which typically refers to formulas in (subsets of) $\GapP$.  A complexity theoretic approach
in this setting was introduced in \cite{Wilf} (see also \cite{Pak18}).

For characters $\chi^\lambda(\pi)$, the $\GapP$ formula is famously given by the
\emph{Murnaghan--Nakayama rule} as the difference is the number of certain
\emph{rim hook tableaux}, see e.g.~\cite[$\S$4.10]{Sag} and \cite[$\S$7.17]{EC2}.
In this context, \cite[Cor.~7.5]{Sta84} gave a simple sufficient condition for
the vanishing $\chi^\la(\mu)=0$.

For Kronecker coefficients $g(\la,\mu,\nu)$, the $\GapP$ formula is given in
in~\cite{BI-kron} (see also \cite{CDW,PP_comp}).
For $\GapP$ formulas of plethysm and Schubert coefficients, see~\cite{FI20} and
\cite[Prop.\ 17.3]{PS}, respectively. In the context of
\emph{Geometric Complexity Theory} (GCT), the importance of
being in $\sharpP$ of plethysm and Kronecker coefficients was discussed in~\cite{Mul09}.
Kahle and Micha{\l}ek \cite{KM18} prove that plethysm coefficients are not
counting integer points in polytopes; this is a restricted notion compared to $\sharpP$
of interest both in Algebraic Combinatorics and GCT.

Stanton and White \cite{SW} gave a generalization of the Robinson--Schensted correspondence
for rim hook tableaux. This was used in \cite{White1,White2}
to obtain combinatorial proofs of two character identities:
first, of a generalization of~\eqref{eq:RSK} given in~\eqref{eq:charsumz},
and then of \eqref{eq:charsum},  but both proofs use an explicit
involution to cancel the signs.

Finally, the complexity classes that we study in this paper
are all standard and have been studied in numerous papers.
In particular, it is known that $\CeqP = \ComCla{coNQP}$ \cite{FGHP99}.

\medskip

\section{Preliminaries}
\subsection{Notation}
A subset $L\subseteq\{0,1\}^*$ is called a \emph{language}. We write $\overline{L} := \{0,1\}^*\setminus L$ to denote the complement of~$L$.
We write $\binom{S}{k}$ for the set of cardinality~$k$ subsets of~$S$.

We use $\nn=\{0,1,2,\ldots\}$ and $[n]=\{1,\ldots,n\}$. We denote by $\mathbb{Z}_k=\{1,\ldots,k\}$ the set of integers modulo $k$.
Let $\aS_n$ denote the group of permutations of $[n]$.

For a list of nonnegative integers $\a$ we call
$\ell(\la)=\max\{i \mid a_i > 0\}$ the \emph{length} of~$\a$.
A \emph{(weak) composition} of $n$ is sequence of nonnegative integers whose entries sum up to $n$.
An \emph{integer partition} $\la$ of $n$, denoted $\la \vdash n$, is a sequence of weakly decreasing nonnegative integers $(\la_1,\la_2,\ldots)$ which sum up to $n$. We write $|\la| = \sum_i \la_i$.

We treat compositions and partitions as vectors with componentwise addition and with the simultaneous rescaling of all components. We also allow adding vectors of different lengths by implicitly appending zeros to the shorter vector.
We write $\sort(\a)$ for the tuple that has the same entries as $\a$, but they are permuted so that they appear in weakly decreasing order.
We denote by $a^b$ the sequence $(a,a,\ldots,a)$ with $a$ appearing $b$ times. We write $\a=(a_1,\ldots,a_l)$ and $\b=(b_1,\ldots,b_l)$ for compositions and $|\a|=a_1+\ldots+a_l$ for their sum.
We use the nested list notation to represent concatenation of lists, e.g., $(\a,\b) = (a_1,a_2,\ldots,a_{\ell(\a)},b_1,b_2,\ldots,b_{\ell(\b)})$, and $(1,5,\d) = (1,5,d_1,d_2,\ldots,d_{\ell(\d)})$.

\subsection{Representation Theory}\label{ss:rep_theory}
Let $\chi^\la\in \cc[\aS_n]$ be the complex irreducible character of $\aS_n$
corresponding to partition $\la\vdash n$, i.e., for $\pi\in\aS_n$ we have that $\chi^\la(\pi)$ equals the trace of the representation matrix corresponding to $\pi$ in the irreducible $\aS_n$-representation (the so-called \emph{Specht module}) of type~$\la$.
From this definition it immediately follows that $\chi^\la(\pi)=\chi^\la(\sigma)$ if $\pi$ and $\sigma$ are permutations that have the same cycle type $\mu$, and we use this fact to define $\chi^\la(\mu)$ for a partition~$\mu$.

For a composition $\a$ of $n$, consider the Young subgroup $\aS_{\a}:=\aS_{a_1}\times \aS_{a_2}\times \ldots$ of $\aS_n$, where $\aS_{a_1}$ permutes only $\{1,\ldots, a_1\}$, $S_{a_2}$ permutes only $\{a_1+1,\ldots,a_1+a_2\}$, etc. The induced trivial representation $\ind_{\aS_{\a}}^{\aS_n} 1$ can be defined as the action of $\aS_n$ on the left cosets of $\aS_n/\aS_{\a}$, see~\cite[$\S$7.18]{EC2}. This is equivalent to the action of $\aS_n$ on words with $a_1$ many $1$s, $a_2$ many $2$s etc by permuting their positions. Denote by $\phi^{\a}$ the character of this representation, then $\phi^{\a}(\pi) = \#\{ u \mid u \pi =u\}$, the number of words fixed by $\pi$. A word $u$ is fixed by $\pi$ if and only if $u_i=u_j$ for all $i,j$ in the same cycle of $\pi$. Thus the number of fixed words is equal to the number of ways we can label the cycles of $\pi$ with $1,2,\ldots$, so that the total number of elements in the cycles labeled by $i$ is equal to $a_i$.

The \emph{Frobenius character formula}, see e.g.\ \cite[Eq.~2.3.8]{JK} (equivalent to the
\emph{Jacobi--Trudi identity}, see e.g.\ \cite[$\S$7.16 and $\S$7.18]{EC2}), gives
\begin{equation}\label{eq:char_jt}
\chi^\la \, = \, \sum_{\sigma \in \aS_{\ell(\la)}} \sgn(\sigma)  \phi^{\la+\sigma-\id}.
\end{equation}
Here $\id=(1,2,\ldots,\ell)\in \aS_\ell$ is the identity permutation, and
$(\la+\sigma-\id)$  denotes the composition
$(\la_1 + \sigma_1 -1, \la_2 + \sigma_2 - 2,\ldots, \la_\ell + \sigma_\ell - \ell)$.
Also, in \eqref{eq:char_jt}, for a composition $\a$ in the summation we let $\phi^{\a}:=0$ if $a_i<0$ for some $i$,
 $\phi^{(\a,0,\b)}:=\phi^{(\a,\b)}$, and $\phi^{(0)}:=1$.

\medskip

\section{Computational Complexity}

\subsection{$\CeqP$ and the Collapse of the Polynomial Hierarchy}
\label{sec:CeqP}
We will use well-known complexity classes
with oracle access to a language in the standard way, see e.g.~\cite{Pap94}.
As it is common, the oracle language is written in the exponent.
For a function $f:\{0,1\}^*\to\IZ$ and an integer comparison operator $\sim$ we define the language $[f\sim 0] := \{w \in \{0,1\}^* \mid f(w)\sim 0\}$.

For a class $Z$ of functions $\{0,1\}^*\to\IZ$ and an integer comparison operator $\sim$ we define the decision class $[Z\sim 0] \subseteq 2^{\{0,1\}^*}$ via: $L \in [Z\sim 0]$ if and only if there exists $f\in Z$ with the property that for all $w\in\{0,1\}^*$ we have $w \in L$ if and only if $f(w)\sim 0$.
Using this notation, we recall that
$\NP = [\sharpP > 0]$,
$\coNP = [\sharpP = 0]$,
$\CeqP = [\GapP = 0]$, and
$\coCeqP = [\GapP \neq 0]$.
In particular, $\coNP\subseteq\CeqP$.

Recall that
$\Sigmap_{0} = \P$, $\Sigmap_{i+1} = \NP^{\Sigmap_i}$, and that $\PH = \bigcup_{i \in \IN} \Sigmap_{i}$.
Moreover, for a class $A \subseteq 2^{\{0,1\}^*}$,
recall that the complement class $\ComCla{co}A$ is defined via
$L \in \ComCla{co}A$ if and only if $\overline{L} \in A$.
For a language~$L$ we write $\langle L \rangle$ to be the class of all languages that are many-one reducible to $L$, for example $\NP=\langle\textup{3SAT}\rangle$, where 3SAT is the language of all satisfiable Boolean formulas in 3CNF.
A language $L\subseteq\{0,1\}^*$ is called $\CeqP$-hard
under many-one reductions if $\CeqP \subseteq \langle L\rangle$.
Our main application is the case where $L=[f=0]$, where $f$ is a function in $\GapP$.

\begin{proposition}\label{pro:collapse}
Given a function $f:\{0,1\}^*\to\IZ$ with the property that $[f=0]$ is $\CeqP$-hard under many-one reductions.
Fix a function $q: \mathbb{Z} \to \mathbb{Z}$ such that $q(0)=0$ and $q(x)>0$ for all $x>0$, for example $q(x)=x^2$ or $q(x)=|x|$.
If $q(f) \in \sharpP$, then $\coNP=\CeqP$ (and in particular $\PH=\Sigmap_2$).
\end{proposition}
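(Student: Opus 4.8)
The plan is to show that the hypothesis forces the $\CeqP$-hard language $[f=0]$ into $\coNP$, and then to combine this with the always-valid inclusion $\coNP\subseteq\CeqP$ (recorded in $\S$\ref{sec:CeqP}) to deduce $\coNP=\CeqP$. First I would set $g:=q(f)$, which by hypothesis lies in $\sharpP$. Since $\coNP=[\sharpP=0]$, the language $[g=0]=\{w\mid g(w)=0\}$ lies in $\coNP$. Now, because $q(y)=0$ if and only if $y=0$ — which is exactly the property the hypotheses on $q$ encode, and which is evident for $q(x)=x^2$ and $q(x)=|x|$ — we have $q(f(w))=0$ if and only if $f(w)=0$, that is, $[g=0]=[f=0]$. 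Hence $[f=0]\in\coNP$.

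Next I would invoke the $\CeqP$-hardness of $[f=0]$ together with closure of $\coNP$ under many-one reductions. By assumption $\CeqP\subseteq\langle[f=0]\rangle$, and since $[f=0]\in\coNP$ and every language many-one reducible to a $\coNP$ language is itself in $\coNP$, we get $\langle[f=0]\rangle\subseteq\coNP$, hence $\CeqP\subseteq\coNP$. Combined with $\coNP\subseteq\CeqP$ this gives $\coNP=\CeqP$, the main assertion. For the parenthetical statement I would argue exactly as in the footnote to \Cref{thm:main}: Tarui's theorem \cite{Tar91} gives $\PH\subseteq\NP^{\CeqP}$, so under $\coNP=\CeqP$ we obtain $\PH\subseteq\NP^{\coNP}=\Sigmap_2\subseteq\PH$, whence $\PH=\Sigmap_2$.

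I do not expect a genuine obstacle here: the argument is a short chain of standard closure facts about $\sharpP$, $\GapP$, $\CeqP$, and the polynomial hierarchy. The only two points that deserve a moment's care are the identification $[q(f)=0]=[f=0]$, where the precise hypothesis on $q$ enters, and the step from ``$[f=0]$ is $\CeqP$-hard and lies in $\coNP$'' to ``$\CeqP\subseteq\coNP$'', which uses that $\coNP$ is closed under many-one reductions.
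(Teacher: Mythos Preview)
Your proposal is correct and matches the paper's proof in substance. The paper phrases the key chain via complements, writing $\CeqP \subseteq \langle[f=0]\rangle = \ComCla{co}\langle[q(f)>0]\rangle \subseteq \coNP$ (using $\NP=[\sharpP>0]$), whereas you argue directly that $[f=0]=[q(f)=0]\in\coNP$ (using $\coNP=[\sharpP=0]$) and then invoke closure of $\coNP$ under many-one reductions; these are two ways of saying the same thing, and the derivation of $\PH=\Sigmap_2$ from Tarui's theorem is identical.
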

\begin{proof} Note that
$\coNP\subseteq\CeqP$ by definition. For the other direction, observe that\\
$\CeqP \subseteq \langle[f=0]\rangle = \ComCla{co}\langle[f\neq0]\rangle = \ComCla{co}\langle[q(f)\neq0]\rangle = \ComCla{co}\langle[q(f)>0]\rangle \stackrel{q(f)\in\sharpP}{\subseteq} \coNP$.
\end{proof}

\smallskip

Since $\ComCla{PH} \subseteq \NP^{\CeqP}$ (see (\cite{Tar91}, and also \cite{Gre93}),
we have that if $\coNP=\CeqP$, then $\PH \subseteq \NP^\CeqP = \NP^\coNP = \Sigmap_2$, and hence $\Sigmap_2=\PH$.
As an aside, we remark that if $[f=0]$ is $\CeqP$-hard \emph{under Turing-reductions} only, then $q(f)\in\sharpP$ also implies $\PH=\Sigmap_2$ via
\[
\PH \ \subseteq \ \NP^\CeqP \ \subseteq \ \NP^{\P^{[f=0]}} \ = \ \NP^{[f=0]} \ = \ \NP^{[q(f)\neq 0]} \ \stackrel{q(f)\in\sharpP}{\subseteq} \ \NP^\coNP \ = \ \Sigmap_2.
\]

\subsection{3D- and 4D-matchings}\label{ss:3DM}
Recall the following standard counting problems, see \cite{GJ79}.

\bigskip

\noindent\begin{minipage}{\textwidth}
Problem   $\problem{\#CircuitSAT}$:
\begin{compactitem}
 \item Input: A Boolean circuit $C$ with $n$ inputs.
 \item Output: The number of $w\in\{0,1\}^n$ with $C(w)=$ true.
\end{compactitem}
\end{minipage}

\bigskip

\noindent\begin{minipage}{\textwidth}
For $i \in \{2,3,4\}$, Problem $\sharpiDM$, 
\begin{compactitem}
 \item Input: A subset $E \subseteq [k]^i$.
 \item Output: The number of $M\in\binom{E}{k}$ such that  $\forall \{(x_1,\ldots,x_i),(x'_1,\ldots,x'_i)\} \in \binom{M}{2}$ we have: $x_1\neq x_1', \ldots, x_i\neq x_i'$.
\end{compactitem}
\end{minipage}

\bigskip


\newcommand{\oneinthree}{\textup{one-in-three}}
We will use a well-known parsimonious polynomial-time reduction $R$ from $\problem{\#CircuitSAT}$ to $\sharpIIIDM$ as a black-box \footnote{Since it is difficult to find this exact statement in the literature, we here give pointers on how to obtain the reduction as a composition of three parsimonious polynomial time reductions. First, take
the classical \emph{Tseytin transformation} (see e.g.\ Example~$8.3$ in~\cite[page 163]{Pap94}), which
is a parsimonious polynomial time reduction from \problem{\#CircuitSAT} to \problem{\#3SAT}.
Next, take Schaefer's parsimonious reduction \cite{Sch78} from \problem{\#3SAT} to \problem{\#1-in-3SAT}:
replace $x\vee y \vee z$ by $\oneinthree(\neg x,u_1,u_2)\,\wedge\,\oneinthree(y,u_2,u_3)\,\wedge\,\oneinthree(\neg z,u_3,u_4)$.
Finally, take Young's parsimonious reduction from $\sharpIIIDM$ to \problem{\#1-in-3SAT}, defined
via a promise problem called \problem{1+3DM}, see~\cite{You20}.
}.

\subsection{Ordered set partitions}
Let $\a=(a_1,\ldots,a_m)$ be a positive integer sequence and $\b=(b_1,\ldots,b_\ell)$ be a nonnegative
integer sequence, both with the same total sum: $|\a|=|\b|$.
An \emph{ordered set partition with item sizes~$\a$ and bin sizes~$\b$}
is a tuple $\vv K = (K_1,\ldots,K_\ell)$ of pairwise disjoint subsets $K_1,\ldots,K_\ell \subseteq [m]$, such that
\begin{equation}
\label{eq:setpartitionconditions}
\bigcup_{i=1}^\ell K_i  =  [m]  \quad \text{and} \quad
\sum_{j\in K_i} a_j  =  b_i  \quad \text{for all $1\le i \le \ell$. }
\end{equation}
We use $\cP(\a,\b)$ to denote the set of ordered set partitions
with with item sizes~$\a$ and bin sizes~$\b$, and let $\rP(\a,\b)=\bigl|\cP(\a,\b)\bigr|$. Here we will assume that $a_i>0$ for all $i$ and $P(\a,\b) =0$ if $b_i<0$ for some $i$. All set partitions considered here will be ordered.

\bigskip

\noindent\begin{minipage}{\textwidth}
Problem \problem{\#SetPartition}:
\begin{compactitem}
 \item Input: $(\a,\b) \in \IN^\ell\times\IN^m$.
 \item Output: The number of $\vv K$ that satisfy \eqref{eq:setpartitionconditions}.
\end{compactitem}
\end{minipage}

\bigskip

\noindent In other words, $\problem{\#SetPartition}(\a,\b)=\rP(\a,\b)$.

\medskip

\section{Main result}
In this section we prove \Cref{thm:CeqPhardnessofchar} and \Cref{thm:PH-char}.
Combined with \Cref{pro:collapse}, \Cref{thm:CeqPhardnessofchar} immediately implies \Cref{thm:main}.

\subsection{Characters and set partitions} We start by translating our problem
from the language of characters of $\aS_n$ into the language of ordered set partitions.

\begin{lemma}\label{l:phi_part}
The characters of the induced representation $\phi^\nu$ evaluated at a conjugacy class of type $\al$ are equal to the number of ordered set partitions of $\al$ into sets of sizes $\nu$. That is,
$$
\phi^\nu(\al) = \rP(\al,\nu).$$
\end{lemma}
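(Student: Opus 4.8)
The plan is to unwind both sides to the same set of combinatorial objects — labelings of the cycles of a fixed permutation — and observe that they coincide; in fact most of the work has already been done in \S\ref{ss:rep_theory}. Recall from there that $\phi^\nu$ is the character of the action of $\aS_n$ on words having $\nu_i$ letters equal to $i$, and that for any $\pi\in\aS_n$ one has $\phi^\nu(\pi) = \#\{u \mid u\pi = u\}$. A word $u$ is fixed by $\pi$ exactly when it is constant along each cycle of $\pi$, so a $\pi$-fixed word is the same datum as a function assigning to each cycle of $\pi$ a letter in $\{1,\dots,\ell\}$, subject to the constraint that the cycles receiving letter $i$ have total length $\nu_i$ (so that the resulting word has the prescribed letter-content).

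Next I would fix $\pi$ of cycle type $\alpha=(\alpha_1,\dots,\alpha_m)$ and index its $m$ cycles by $[m]$, with cycle $j$ of length $\alpha_j$. A labeling as above is then a map $f\colon [m]\to[\ell]$; setting $K_i := f^{-1}(i)$ produces a tuple $\vv K=(K_1,\dots,K_\ell)$ of pairwise disjoint subsets of $[m]$ with $\bigcup_i K_i=[m]$, and the total-length constraint reads $\sum_{j\in K_i}\alpha_j = \nu_i$ for all $i$ — which is precisely \eqref{eq:setpartitionconditions} with item sizes $\alpha$ and bin sizes $\nu$, i.e.\ $\vv K\in\cP(\alpha,\nu)$. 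Conversely, every $\vv K\in\cP(\alpha,\nu)$ arises this way from the map sending $j$ to the unique $i$ with $j\in K_i$. Hence $f\mapsto\vv K$ is a bijection from $\pi$-fixed words onto $\cP(\alpha,\nu)$, which gives $\phi^\nu(\alpha)=\phi^\nu(\pi)=\rP(\alpha,\nu)$.

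There is essentially no real obstacle here; the statement is a repackaging of the fixed-word count from \S\ref{ss:rep_theory}. The only points deserving a moment's care are the bookkeeping conventions — which sequence plays the role of ``item sizes'' (the cycle lengths $\alpha_j$, which are all positive, consistent with the standing assumption $a_i>0$) versus ``bin sizes'' (the content $\nu$, which may have zero parts, matching the conventions $\phi^{(\a,0,\b)}=\phi^{(\a,\b)}$ and allowing empty bins $K_i=\emp$) — together with the observation that $|\alpha|=|\nu|=n$, so that the objects on both sides are well defined.
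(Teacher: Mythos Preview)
Your proof is correct and follows essentially the same approach as the paper: both arguments invoke the fixed-word description of $\phi^\nu(\pi)$ from \S\ref{ss:rep_theory}, note that a fixed word is determined by a labeling of the cycles, and set up the obvious bijection $u\leftrightarrow(K_1,\dots,K_\ell)$ with $K_i$ the set of cycles carrying label~$i$. Your write-up is a bit more explicit about the inverse map and the bookkeeping conventions, but there is no substantive difference.
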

\begin{proof}
As explained in $\S$\ref{ss:rep_theory}, the evaluation $\phi^\nu(\al)$ is equal to the number of words $u$ with $\nu_i$ letters $i$ for $i=1,\ldots,\ell(\nu)$, which are fixed under permuting the positions of their entries by a permutation $\pi$ of cycle type $\al=(\al_1,\ldots, \al_m)$. Thus, the positions (elements of $\pi$) in the same cycle have the same letter. Let the cycles of $\pi$ be $c_1,\ldots,c_m$ of lengths $\al_1,\ldots,\al_m$ respectively. Let $K_i=\{ j: u|_{c_j}=(i,\ldots,i)\}$ be the set of cycles on which $u$ has value $i$. Then $(K_1,\ldots,K_{\ell(\nu)})$ is an ordered set partition with item sizes $\al_1,\al_2,\ldots$ and bin sizes $\nu_1,\nu_2,\ldots$ Conversely, such a set partition determines the word $u$ uniquely, and so $\rP(\al,\nu) = \phi^\nu(\al)$.
\end{proof}

\begin{proposition}\label{p:char-part}
Let $\la \vdash n$, $l\geq\ell(\la)$, and let $\al$ be a composition of $n$ with at most $l$ parts. Then
$$\chi^\la(\al)  =  \sum_{\sigma \in \aS_{l}} \sgn(\sigma)  \rP(\al,\la+\sigma-\id).$$
\end{proposition}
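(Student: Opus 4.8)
The plan is to combine the Frobenius character formula \eqref{eq:char_jt} with \Cref{l:phi_part}. Equation \eqref{eq:char_jt} expresses $\chi^\la$ as the signed sum $\sum_{\sigma\in\aS_{\ell(\la)}}\sgn(\sigma)\,\phi^{\la+\sigma-\id}$, where the conventions for $\phi^\a$ when some $a_i<0$ (namely $\phi^\a:=0$) and for trailing zeros match exactly the conventions set for $\rP(\a,\b)$ in $\S$\ref{ss:3DM} (namely $P(\a,\b)=0$ if $b_i<0$, and zeros in the bin-size vector being ignored). So the only substantive thing to do is evaluate both sides at a permutation of cycle type $\al$ and rewrite each $\phi^{\la+\sigma-\id}(\al)$ using \Cref{l:phi_part}.

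First I would set $\ell=\ell(\la)$ (or more generally take $\ell\geq\ell(\la)$ as in the statement, padding $\la$ with zeros, which is harmless since $\phi$ and $\rP$ ignore trailing zeros and the sign of the permutation is unaffected by acting on the extra fixed points) and write, for $\pi\in\aS_n$ of cycle type $\al$,
\[
\chi^\la(\al) \;=\; \chi^\la(\pi) \;=\; \sum_{\sigma\in\aS_{\ell}}\sgn(\sigma)\,\phi^{\la+\sigma-\id}(\pi).
\]
Then I would apply \Cref{l:phi_part} term by term: for each $\sigma$, $\phi^{\la+\sigma-\id}(\al)=\rP(\al,\la+\sigma-\id)$. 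One should note that \Cref{l:phi_part} is stated for $\phi^\nu$ with $\nu$ a genuine bin-size vector of nonnegative integers summing to $|\al|$; when $\la+\sigma-\id$ has a negative entry, the left side $\phi^{\la+\sigma-\id}$ is $0$ by the convention in \eqref{eq:char_jt}, and the right side $\rP(\al,\la+\sigma-\id)$ is also $0$ by the convention that $P(\a,\b)=0$ if $b_i<0$, so the identity $\phi^{\la+\sigma-\id}(\al)=\rP(\al,\la+\sigma-\id)$ still holds. Similarly, when $\la+\sigma-\id$ has some zero entries, both sides are unchanged by deleting them, so \Cref{l:phi_part} applies to the reduced vector. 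Substituting gives exactly $\chi^\la(\al)=\sum_{\sigma\in\aS_{\ell}}\sgn(\sigma)\,\rP(\al,\la+\sigma-\id)$.

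There is no real obstacle here — the proposition is essentially a translation — but the one point that deserves care is the bookkeeping of degenerate terms in the sum: I would explicitly check that the ``$\phi^\a:=0$ if $a_i<0$'', ``$\phi^{(\a,0,\b)}:=\phi^{(\a,\b)}$'', and ``$\phi^{(0)}:=1$'' conventions attached to \eqref{eq:char_jt} are matched by the ``$P(\a,\b)=0$ if $b_i<0$'', ``zeros in $\b$ ignored'', and ``$\rP$ of the empty vector against the empty vector equals $1$'' conventions for ordered set partitions, so that the term-by-term rewriting is legitimate. (One might also remark why $\ell\geq\ell(\la)$ is allowed rather than just $\ell=\ell(\la)$: enlarging $\ell$ replaces the sum over $\aS_{\ell(\la)}$ by a sum over $\aS_\ell$, but the extra permutations either produce a negative entry somewhere — contributing $0$ on both sides — or, if $\sigma$ restricted to the last $\ell-\ell(\la)$ coordinates is not itself the identity there in a way forcing a negative coordinate, one checks the contributions cancel in sign-reversing pairs; the cleanest route is to invoke that \eqref{eq:char_jt} holds verbatim with $\ell(\la)$ replaced by any $\ell\geq\ell(\la)$, which is the standard form of the Jacobi–Trudi/Frobenius identity.) With that verified, the proof is complete.
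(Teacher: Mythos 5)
Your proposal is correct and takes exactly the same route as the paper, which proves \Cref{p:char-part} in one line by combining the Frobenius formula \eqref{eq:char_jt} with \Cref{l:phi_part}; your additional checks that the degenerate-term conventions for $\phi^{\a}$ and $\rP(\a,\b)$ agree are sound elaborations of that same argument.
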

\begin{proof}
This follows directly from equation~\eqref{eq:char_jt} and Lemma~\ref{l:phi_part}.
\end{proof}

\begin{lemma}\label{p:char-part2}
Let $\a$ and $\b$ be two positive sequences with equal sums, and let $\b$ have $\ell$ parts. Let $p\geq\ell+1$, $\la=\sort(p\cdot \b)$  and $\al =p\cdot \a +(1,-1,0,\ldots)$.
Then
\begin{equation}\label{eq:char-part2-sums}
\chi^\la(\al) \, = \, \sum_{i=1}^\ell \rP\big(\ov{\a}, \b -(a_1+a_2)e_i\big)  -
\sum_{i=1}^{\ell-1} \rP\big(\ov{\a},\b - a_1e_i -a_2e_{i+1}\big),
\end{equation}
where
$e_i$ is the $i$-th standard basis vector 
and
$\bar{\a} = (a_3,a_4,\ldots)$.
\end{lemma}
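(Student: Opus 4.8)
The plan is to start from the expression for $\chi^\la(\al)$ given by Proposition~\ref{p:char-part}, namely
\[
\chi^\la(\al) \ = \ \sum_{\sigma \in \aS_{\ell(\la)}} \sgn(\sigma)\, \rP(\al, \la+\sigma-\id),
\]
and to show that for the special choice $\la = \sort(p\b)$ and $\al = p\a + e_1 - e_2$ with $p = \ell+1$, almost all of the $\ell(\la)!$ terms vanish. The key point is a divisibility/size obstruction: every part of $\la$ is a multiple of $p$, while the entries of $\al$ are congruent to entries of $p\a$ shifted by $\pm 1$ in the first two coordinates. First I would note that $\ell(\la) = \ell$ (since $\b$ is positive with $\ell$ parts, $p\b$ has $\ell$ positive parts), so the sum runs over $\sigma \in \aS_\ell$. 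For a term indexed by $\sigma$ to be nonzero we need $\la+\sigma-\id$ to be a genuine composition (all entries $\ge 0$) admitting an ordered set partition of $\al$.

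The main step is to analyze, for each $\sigma \in \aS_\ell$, which bins of size $(\la+\sigma-\id)_i$ can be filled by items of sizes $\al_j$. Work modulo $p$: the item sizes are $\al = p\a + e_1 - e_2$, so all items have size $\equiv 0 \pmod p$ except item $1$ (size $pa_1+1 \equiv 1$) and item $2$ (size $pa_2 - 1 \equiv -1 \pmod p$). The bin sizes are $(\la + \sigma - \id)_i = \la_i + \sigma_i - i$. Since $\la_i \equiv 0 \pmod p$ (as $\la = \sort(p\b)$), the bin sizes are $\equiv \sigma_i - i \pmod p$. Summing sizes in each bin of an ordered set partition, the total from items in bin $K_i$ must be $\equiv (\text{number of }1\text{'s in }K_i) - (\text{number of }2\text{'s in }K_i) \pmod p$ — i.e. $\equiv [1 \in K_i] - [2 \in K_i] \pmod p$. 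So we need $\sigma_i - i \equiv [1\in K_i] - [2\in K_i] \pmod p$ for every $i$. Because $|\sigma_i - i| \le \ell - 1 < p$ and $[1\in K_i]-[2\in K_i] \in \{-1,0,1\}$, this congruence is actually an equality: $\sigma_i - i = [1\in K_i]-[2\in K_i]$. Since items $1$ and $2$ each land in exactly one bin, the permutation $\sigma$ can differ from the identity in at most two positions, and the multiset $\{\sigma_i - i\}_i$ is either all zeros (so $\sigma = \id$), or consists of one $+1$, one $-1$, and the rest zeros. The latter forces $\sigma$ to be a transposition of two adjacent positions $i, i+1$ with $\sigma_i = i+1$, $\sigma_{i+1} = i$ — a "simple reflection'' $s_i$ — which has sign $-1$; moreover then item $1$ must be in bin $i$ and item $2$ in bin $i+1$.

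Carrying this out: the $\sigma = \id$ term contributes $+\rP(\al, \la)$. For each adjacent transposition $s_i$ ($1 \le i \le \ell-1$), $\sgn(s_i) = -1$ and the term is $-\rP(\al, \la + s_i - \id)$, subject to the constraint that in any contributing set partition, item $1$ sits in bin $i$ and item $2$ sits in bin $i+1$. Now I would remove items $1$ and $2$: define $\ov{\a}$ to be $\a$ with the first two entries deleted (this matches the paper's notation $\ov{\a}$, which I take to be $(a_3, a_4, \ldots, a_m)$, the "rest'' of $\a$; this is the natural reading given $a_1 + a_2$ appearing in the statement). Placing item $1$ (size $pa_1+1$) in bin $i$ and item $2$ (size $pa_2-1$) in the same role, their combined contribution when forced into specified bins reduces the available bin capacities; after deleting them and dividing through the remaining structure, one gets exactly ordered set partitions of $\ov{\a}$ into bins $\b$ with the relevant coordinates decreased. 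For $\sigma = \id$: no forcing, items $1,2$ together can go into the same bin (total $p(a_1+a_2)$, landing in some bin $i$, reducing $b_i$ by $a_1+a_2$) or into different bins — but checking residues again, item $1$ alone in bin $i$ needs that bin's residue to be $+1$, impossible since $\la_i \equiv 0$; so with $\sigma = \id$ both items $1$ and $2$ must be in the \emph{same} bin, say bin $i$, contributing $\sum_{i=1}^\ell \rP(\ov{\a}, \b - (a_1+a_2)e_i)$ after rescaling by $1/p$. For $\sigma = s_i$: item $1$ forced in bin $i$, item $2$ forced in bin $i+1$, giving $\rP(\ov{\a}, \b - a_1 e_i - a_2 e_{i+1})$ after rescaling. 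Summing with signs yields exactly \eqref{eq:char-part2-sums}.

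\textbf{Main obstacle.} The delicate part is the bookkeeping in the reduction from ordered set partitions of $(\al, \la+\sigma-\id)$ to ordered set partitions of $(\ov{\a}, \b - \cdots)$: one must check that after placing the two special items $1$ and $2$ into their forced bins and deleting them, the remaining bin sizes are all divisible by $p$ (they are: $\la_j \equiv 0$ and the deletions remove exactly the $\pm 1$ discrepancies), and that dividing everything by $p$ gives a bijection with $\cP(\ov{\a}, \b')$ for the stated $\b'$. Also one must be careful that $\la = \sort(p\b)$ is a permutation of $p\b$, so $\rP(\al, \la)$ must be re-expressed via the corresponding permuted bin sizes — but $\rP$ is symmetric in the reordering of bins only up to tracking which bin is which, and here the argument is cleanest if one observes $\rP(\al, \la + \sigma - \id)$ with $\la = \sort(p\b)$ can be matched coordinatewise back to $p\b$ since the congruence argument pins down the position $i$ of the special bins relative to $\b$'s own indexing. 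Verifying this matching (that $\ell(\la) = \ell$ and that sorting does not disturb the adjacency structure needed for the $s_i$ terms) is the one spot requiring genuine care; everything else is the residue computation above plus signs.
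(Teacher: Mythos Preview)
Your proposal is correct and follows essentially the same residue-mod-$p$ argument as the paper: apply Proposition~\ref{p:char-part}, observe that all item sizes are $\equiv 0\pmod p$ except the first two, deduce that only $\sigma=\id$ and the adjacent transpositions $s_i$ can contribute, and then split into the two cases according to whether items~$1$ and~$2$ land in the same bin or in adjacent bins. The sorting issue you flag as the ``main obstacle'' is handled in the paper simply by assuming without loss of generality that $b_1\ge b_2\ge\cdots$, so that $\la=p\b$ and the bin index~$i$ in the Frobenius sum coincides with the index in~$\b$.
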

\begin{proof}
Without loss of generality, assume $b_1 \geq b_2 \geq \ldots$
We apply \Cref{p:char-part} with the given partitions. Consider a set partition of item sizes $\al=(pa_1+1,pa_2-1,pa_3,\ldots)$ into bins of sizes $pb_i +\sigma_i -i$ for $i=1,\ldots,\ell$.
Since $p$ divides $\al_i$ for $i \neq 1,2$, we must have that at most two of the sum sets are not divisible by $p$, and so $\sigma_j \equiv j \pmod{p}$ for all but possibly two values of $j$ corresponding to the bins containing $\al_1$ and $\al_2$.

We have two possibilities. In the first case, both $\al_1, \al_2$ are in the same set (bin), of size $\la_i + \sigma_i-i$ for some~$i$. Since $\al_1+\al_2 =p(a_1+a_2)$ and $\al_i=pa_i$ for all other $i$s, the bin size must be divisible by $p$. Thus $0 \equiv \la_i + \sigma_i-i \equiv pb_i +\sigma_i -i \pmod{p}$ for all $i$ and so $\sigma = \id$. Choosing in which set the $\al_1+\al_2$ go gives us the left big summation in \eqref{eq:char-part2-sums}.

In the second case, $\al_1,\al_2$ are in two different sets (bins), say $t$ and $r$, whose sums must then be $\equiv 1, -1 \pmod{p}$ respectively. Since all other item sizes are divisible by $p$, we must have $\la_i+\sigma_i-i = pb_i +\sigma_i-i \equiv 0 \pmod{p}$ for $i\neq r,t$.
Thus $\sigma_i=i$ for $i\neq t,r$ and we must have $\sigma_t=r$ and $\sigma_r=t$. Then $\la_t+r-t \equiv +1 \pmod{p}$ and $\la_r +t-r \equiv -1 \pmod{p}$. Since $1 \le r,t \le p-1$, we must have $r=t+1$, and we arrive in the other big summation, with $\al_1$ in set $t$ and $\al_2$ in set $t+1$. This completes the proof.
\end{proof}

\begin{proposition}\label{p:char-part3}
Let $\c$ and $\d$ be two sequences of nonnegative integers, such that $|\c|=|\d|+6$. Then there are polynomial time computable partitions $\la$ and $\al$, such that
\[
\chi^\la(\al)  =  \rP\big(\c, (2,4,\d)\big)  -  \rP\big(\c,(1,5,\d)\big).
\]
\end{proposition}
\begin{proof}
We will use \Cref{p:char-part2} with the following construction.
Set $m := \max\{c_1,\ldots, d_1,\ldots\}+4$.
Let $\a := (2, m , m-3,  \c)$ and $\b := (m+4,m+1, \d)$.  Now construct $\la=\sort(p\cdot\b)=(p(m+4),p(m+1),p\cdot\sort(\d))$ and~$\al=p\cdot\a+(1,-1)$ as in \Cref{p:char-part2}. Note that $b_i \geq a_1+a_2 = m+2$ only for $i=1$, and $b_{i+1} \geq m=a_2$ only for $i=1$, so the only nonzero terms in equation~\eqref{eq:char-part2-sums} are the summands for $i=1$.

We thus obtain
$$\chi^\la(\al) \, = \, \rP\big( (m-3, \c), (2, m+1, \d) \big)  -  \rP\big( (m-3, \c), (m+2, 1, \d) \big).
$$
Since $m-3 > d_i$ for all $i$, the item of size $m-3$ can only go into the bins of sizes $m+1$ and $m+2$, respectively.  Therefore,
we have:
$$\chi^\la(\al) \, = \, \rP\big( \c, (2, 4, \d) \big)  -  \rP\big( \c, (5,1,\d)\big),
$$
and the proof is complete.
\end{proof}

\subsection{The join of two 3D-matchings}
Let $[k] := \{1,\ldots,k\}$.
In particular, $[u]\subseteq[k]$ for $u \leq k$.
We write $+_k$ to indicate addition modulo $k$ in $[k]$, e.g., $k+_k 1=1$.
We consider uniform 3-partite hypergraphs on a vertex set $V_1 \cup V_2 \cup V_3$, where $V_i=\{(1,i),(2,i),\ldots,(k,i)\}$, which we identify as copies of $[k]$ for simplicity. Let $E\subset V_1 \times V_2 \times V_3$ be a set of hyperedges each covering 3 vertices, one from each set $V_i$, and encoded as triples $(e_1,e_2,e_3)$ where $e_i \in [k]$ by ignoring the second coordinates of the vertices, so we can think of $E \subseteq [k]^3$.
For $u \geq k$ define the \emph{padding} $E_{u}\subseteq [u]^3$
via $E_{u} := E \cup \{(x,x,x) \mid k<x\leq u\}$.
Clearly $\sharpIIIDM(E) = \sharpIIIDM(E_{u})$ for every $u \geq k$.
This is illustrated as the first step in Figure~\ref{fig:join}.
Given two subsets $E\subseteq[k]^3$ and $E' \subseteq[k']^3$, let $u:=1+\max\{k,k'\}$.
We define the $\join(E,E'):=(J,H,H')$ to be the following 3-tuple $(J,H,H')$, consisting of a $\sharpIVDM$ instance $J\subseteq[u]^4$ and two elements $H\in J$ and $H'\in J$ as follows.
\begin{itemize}
 \item $J := \{(x,x,y,z) \mid (x,y,z)\in E_u\} \ \cup \ \{(x+_u 1,x,y,z) \mid (x,y,z)\in E'_u\}$,
 \item $H := (u,u,u,u)$,
 \item $H' := (1,u,u,u)$.
\end{itemize}
This construction is illustrated as the last step in Figure~\ref{fig:join}.
Elements in $J$ are called hyperedges.
Note that by construction $H$ and $H'$ are the only hyperedges in $J$ that have $u$ as their last coordinate.

\tikzset{
  double arrow/.style args={#1 colored by #2 and #3}{
    Round Cap-Round Cap, shorten <=-0.5cm, shorten >=-0.5cm, line width=#1,#2,
    postaction={draw,Round Cap-Round Cap,#3,line width=3*(#1)/4,shorten <=-0.41cm,shorten >=-0.41cm},
  }
}

\begin{figure}
\begin{center}
\scalebox{0.5}{
\begin{tikzpicture}
\node at (0,0.5) {
\begin{tikzpicture}[y=-1cm]
\fill[fill=blue!10, rounded corners] (-1,-1) -- (7,-1) -- (7,3) -- (-1,3) -- cycle;
\draw[double arrow=15pt colored by black and gray!20, rounded corners] (0,2)--(3,2)--(6,0);
\draw[double arrow=15pt colored by black and gray!20, rounded corners] (0,2)--(3,0)--(6,0);
\draw[double arrow=15pt colored by black and gray!20, rounded corners] (0,0)--(3,2)--(6,2);
\node[circle, draw=black, fill=red!80, line width=0.7mm, inner sep=0pt,minimum size=7pt] at (0,0) {};
\node[circle, draw=black, fill=red!80, line width=0.7mm, inner sep=0pt,minimum size=7pt] at (0,2) {};
\node[circle, draw=black, fill=green!80, line width=0.7mm, inner sep=0pt,minimum size=7pt] at (3,0) {};
\node[circle, draw=black, fill=green!80, line width=0.7mm, inner sep=0pt,minimum size=7pt] at (3,2) {};
\node[circle, draw=black, fill=blue!80, line width=0.7mm, inner sep=0pt,minimum size=7pt] at (6,0) {};
\node[circle, draw=black, fill=blue!80, line width=0.7mm, inner sep=0pt,minimum size=7pt] at (6,2) {};
\end{tikzpicture}
};
\node at (9,0.5) {
\begin{tikzpicture}[y=-1cm]
\fill[fill=blue!10, rounded corners] (-1,-1) -- (7,-1) -- (7,5) -- (-1,5) -- cycle;
\draw[double arrow=25pt colored by black and gray!40, rounded corners] (0,4)--(3,0)--(6,4);
\draw[double arrow=25pt colored by black and gray!40, rounded corners] (0,0)--(3,2)--(6,4);
\draw[double arrow=25pt colored by black and gray!40, rounded corners] (0,2)--(3,4)--(6,0);
\draw[double arrow=25pt colored by black and gray!40, rounded corners] (0,0)--(3,0)--(6,2);
\node[circle, draw=black, fill=red!80, line width=0.7mm, inner sep=0pt,minimum size=7pt] at (0,0) {};
\node[circle, draw=black, fill=red!80, line width=0.7mm, inner sep=0pt,minimum size=7pt] at (0,2) {};
\node[circle, draw=black, fill=red!80, line width=0.7mm, inner sep=0pt,minimum size=7pt] at (0,4) {};
\node[circle, draw=black, fill=green!80, line width=0.7mm, inner sep=0pt,minimum size=7pt] at (3,0) {};
\node[circle, draw=black, fill=green!80, line width=0.7mm, inner sep=0pt,minimum size=7pt] at (3,2) {};
\node[circle, draw=black, fill=green!80, line width=0.7mm, inner sep=0pt,minimum size=7pt] at (3,4) {};
\node[circle, draw=black, fill=blue!80, line width=0.7mm, inner sep=0pt,minimum size=7pt] at (6,0) {};
\node[circle, draw=black, fill=blue!80, line width=0.7mm, inner sep=0pt,minimum size=7pt] at (6,2) {};
\node[circle, draw=black, fill=blue!80, line width=0.7mm, inner sep=0pt,minimum size=7pt] at (6,4) {};
\end{tikzpicture}
};
\node at (0,-3.25) {\scalebox{3}{$\Downarrow$}};
\node at (9,-3.25) {\scalebox{3}{$\Downarrow$}};
\node at (0,-8) {
\begin{tikzpicture}[y=-1cm]
\fill[fill=blue!10, rounded corners] (-1,-1) -- (7,-1) -- (7,7) -- (-1,7) -- cycle;
\draw[double arrow=15pt colored by black and gray!20, rounded corners] (0,6)--(3,6)--(6,6);
\draw[double arrow=15pt colored by black and gray!20, rounded corners] (0,4)--(3,4)--(6,4);
\draw[double arrow=15pt colored by black and gray!20, rounded corners] (0,2)--(3,2)--(6,0);
\draw[double arrow=15pt colored by black and gray!20, rounded corners] (0,2)--(3,0)--(6,0);
\draw[double arrow=15pt colored by black and gray!20, rounded corners] (0,0)--(3,2)--(6,2);
\node[circle, draw=black, fill=red!80, line width=0.7mm, inner sep=0pt,minimum size=7pt] at (0,0) {};
\node[circle, draw=black, fill=red!80, line width=0.7mm, inner sep=0pt,minimum size=7pt] at (0,2) {};
\node[circle, draw=black, fill=red!80, line width=0.7mm, inner sep=0pt,minimum size=7pt] at (0,4) {};
\node[circle, draw=black, fill=red!80, line width=0.7mm, inner sep=0pt,minimum size=7pt] at (0,6) {};
\node[circle, draw=black, fill=green!80, line width=0.7mm, inner sep=0pt,minimum size=7pt] at (3,0) {};
\node[circle, draw=black, fill=green!80, line width=0.7mm, inner sep=0pt,minimum size=7pt] at (3,2) {};
\node[circle, draw=black, fill=green!80, line width=0.7mm, inner sep=0pt,minimum size=7pt] at (3,4) {};
\node[circle, draw=black, fill=green!80, line width=0.7mm, inner sep=0pt,minimum size=7pt] at (3,6) {};
\node[circle, draw=black, fill=blue!80, line width=0.7mm, inner sep=0pt,minimum size=7pt] at (6,0) {};
\node[circle, draw=black, fill=blue!80, line width=0.7mm, inner sep=0pt,minimum size=7pt] at (6,2) {};
\node[circle, draw=black, fill=blue!80, line width=0.7mm, inner sep=0pt,minimum size=7pt] at (6,4) {};
\node[circle, draw=black, fill=blue!80, line width=0.7mm, inner sep=0pt,minimum size=7pt] at (6,6) {};
\end{tikzpicture}
};
\node at (9,-8) {
\begin{tikzpicture}[y=-1cm]
\fill[fill=blue!10, rounded corners] (-1,-1) -- (7,-1) -- (7,7) -- (-1,7) -- cycle;
\draw[double arrow=25pt colored by black and gray!40, rounded corners] (0,6)--(3,6)--(6,6);
\draw[double arrow=25pt colored by black and gray!40, rounded corners] (0,4)--(3,0)--(6,4);
\draw[double arrow=25pt colored by black and gray!40, rounded corners] (0,0)--(3,2)--(6,4);
\draw[double arrow=25pt colored by black and gray!40, rounded corners] (0,2)--(3,4)--(6,0);
\draw[double arrow=25pt colored by black and gray!40, rounded corners] (0,0)--(3,0)--(6,2);
\node[circle, draw=black, fill=red!80, line width=0.7mm, inner sep=0pt,minimum size=7pt] at (0,0) {};
\node[circle, draw=black, fill=red!80, line width=0.7mm, inner sep=0pt,minimum size=7pt] at (0,2) {};
\node[circle, draw=black, fill=red!80, line width=0.7mm, inner sep=0pt,minimum size=7pt] at (0,4) {};
\node[circle, draw=black, fill=red!80, line width=0.7mm, inner sep=0pt,minimum size=7pt] at (0,6) {};
\node[circle, draw=black, fill=green!80, line width=0.7mm, inner sep=0pt,minimum size=7pt] at (3,0) {};
\node[circle, draw=black, fill=green!80, line width=0.7mm, inner sep=0pt,minimum size=7pt] at (3,2) {};
\node[circle, draw=black, fill=green!80, line width=0.7mm, inner sep=0pt,minimum size=7pt] at (3,4) {};
\node[circle, draw=black, fill=green!80, line width=0.7mm, inner sep=0pt,minimum size=7pt] at (3,6) {};
\node[circle, draw=black, fill=blue!80, line width=0.7mm, inner sep=0pt,minimum size=7pt] at (6,0) {};
\node[circle, draw=black, fill=blue!80, line width=0.7mm, inner sep=0pt,minimum size=7pt] at (6,2) {};
\node[circle, draw=black, fill=blue!80, line width=0.7mm, inner sep=0pt,minimum size=7pt] at (6,4) {};
\node[circle, draw=black, fill=blue!80, line width=0.7mm, inner sep=0pt,minimum size=7pt] at (6,6) {};
\end{tikzpicture}
};
\node at (2.5, -12.5) {\scalebox{3}{\rotatebox{45}{$\Downarrow$}}};
\node at (6.5, -12.5) {\scalebox{3}{\rotatebox{-45}{$\Downarrow$}}};
\node at (4.5, -17) {
\begin{tikzpicture}[y=-1cm]
\fill[fill=blue!10, rounded corners] (-4,-1) -- (7,-1) -- (7,7) -- (-4,7) -- cycle;
\draw[double arrow=25pt colored by black and gray!40, rounded corners] (-3,0) -- (0,6)--(3,6)--(6,6);
\draw[double arrow=25pt colored by black and gray!40, rounded corners] (-3,6) -- (0,4)--(3,0)--(6,4);
\draw[double arrow=25pt colored by black and gray!40, rounded corners] (-3,2) -- (0,0)--(3,2)--(6,4);
\draw[double arrow=25pt colored by black and gray!40, rounded corners] (-3,4) -- (0,2)--(3,4)--(6,0);
\draw[double arrow=25pt colored by black and gray!40, rounded corners] (-3,2) -- (0,0)--(3,0)--(6,2);
\draw[double arrow=15pt colored by black and gray!20, rounded corners] (-3,6) -- (0,6)--(3,6)--(6,6);
\draw[double arrow=15pt colored by black and gray!20, rounded corners] (-3,4) -- (0,4)--(3,4)--(6,4);
\draw[double arrow=15pt colored by black and gray!20, rounded corners] (-3,2) -- (0,2)--(3,2)--(6,0);
\draw[double arrow=15pt colored by black and gray!20, rounded corners] (-3,2) -- (0,2)--(3,0)--(6,0);
\draw[double arrow=15pt colored by black and gray!20, rounded corners] (-3,0) -- (0,0)--(3,2)--(6,2);
\node[circle, draw=black, fill=orange!60, line width=0.7mm, inner sep=0pt,minimum size=7pt] at (-3,0) {};
\node[circle, draw=black, fill=orange!60, line width=0.7mm, inner sep=0pt,minimum size=7pt] at (-3,2) {};
\node[circle, draw=black, fill=orange!60, line width=0.7mm, inner sep=0pt,minimum size=7pt] at (-3,4) {};
\node[circle, draw=black, fill=orange!60, line width=0.7mm, inner sep=0pt,minimum size=7pt] at (-3,6) {};
\node[circle, draw=black, fill=red!80, line width=0.7mm, inner sep=0pt,minimum size=7pt] at (0,0) {};
\node[circle, draw=black, fill=red!80, line width=0.7mm, inner sep=0pt,minimum size=7pt] at (0,2) {};
\node[circle, draw=black, fill=red!80, line width=0.7mm, inner sep=0pt,minimum size=7pt] at (0,4) {};
\node[circle, draw=black, fill=red!80, line width=0.7mm, inner sep=0pt,minimum size=7pt] at (0,6) {};
\node[circle, draw=black, fill=green!80, line width=0.7mm, inner sep=0pt,minimum size=7pt] at (3,0) {};
\node[circle, draw=black, fill=green!80, line width=0.7mm, inner sep=0pt,minimum size=7pt] at (3,2) {};
\node[circle, draw=black, fill=green!80, line width=0.7mm, inner sep=0pt,minimum size=7pt] at (3,4) {};
\node[circle, draw=black, fill=green!80, line width=0.7mm, inner sep=0pt,minimum size=7pt] at (3,6) {};
\node[circle, draw=black, fill=blue!80, line width=0.7mm, inner sep=0pt,minimum size=7pt] at (6,0) {};
\node[circle, draw=black, fill=blue!80, line width=0.7mm, inner sep=0pt,minimum size=7pt] at (6,2) {};
\node[circle, draw=black, fill=blue!80, line width=0.7mm, inner sep=0pt,minimum size=7pt] at (6,4) {};
\node[circle, draw=black, fill=blue!80, line width=0.7mm, inner sep=0pt,minimum size=7pt] at (6,6) {};
\end{tikzpicture}
};
\end{tikzpicture}
}
\end{center}

\caption{
On the top left: The \problem{\#3DM} instance $E=\{(1,2,2),(2,1,1),(2,2,1)\}$, where $V_1$ are the red circles in the first column, $V_2$ are the green circles in the second column, and $V_3$ are the blue circles in the third column.
For example, $(1,2,2)$ is depicted as a hyperedge containing the red vertex in column~1, row~1, and the green vertex in column~2, row~2, and the blue vertex in column~3, row~2.
On the top right: The \problem{\#3DM} instance $\{(1,1,2),(1,2,3),(2,3,1),(3,1,3)\}$.
The join of these two instances is obtained by first padding them to the same number of rows $u$ (here $u=4$), and then adding another dimension to each hyperedge (which adds a column of points at the front) and taking the union of both hypergraphs. The two special hyperedges $H=(4,4,4,4)$ and $H'=(1,4,4,4)$ are the ones containing the bottom right vertex. The different shades of gray for the hyperedges are just for illustration.}
\label{fig:join}
\end{figure}

\begin{lemma}
\label{lem:joinminusH}
Given two subsets $E\subseteq[k]^3$ and $E' \subseteq[k']^3$, let $(J,H,H')=\join(E,E')$.
Then
$\sharpIVDM(J\setminus\{H'\}) = \sharpIIIDM(E)$ and
$\sharpIVDM(J\setminus\{H\}) = \sharpIIIDM(E')$.
\end{lemma}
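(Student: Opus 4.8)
The plan is to prove the first equality, $\sharpIVDM(J\setminus\{H'\}) = \sharpIIIDM(E)$, by exhibiting an explicit bijection between the perfect matchings of $J\setminus\{H'\}$ counted by $\sharpIVDM$ and the perfect matchings of $E_u$ counted by $\sharpIIIDM$; since $\sharpIIIDM(E_u)=\sharpIIIDM(E)$ by the padding remark preceding the lemma, this suffices, and the second equality follows by interchanging the roles of $H,H'$ and of $E,E'$. Throughout I will use that a ``perfect matching'' of a set $F\subseteq\IZ_u^d$, i.e.\ an $M\in\binom{F}{u}$ whose members pairwise differ in all $d$ coordinates, is the same thing as an $M\in\binom{F}{u}$ for which each of the $d$ coordinates, restricted to $M$, is a bijection onto $\IZ_u$ (there are $u$ hyperedges and $u$ possible values).

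First I would record the shape of the hyperedges of $J$. Each hyperedge is either \emph{$E$-type}, of the form $(x,x,y,z)$ with $(x,y,z)\in E_u$, or \emph{$E'$-type}, of the form $(x+_u 1,\,x,\,y,\,z)$ with $(x,y,z)\in E'_u$; in both cases the first two coordinates are determined by the second coordinate alone, being $(x,x)$ or $(x+_u 1,x)$, and (for $u\ge 2$) no hyperedge is of both types. Because every coordinate of every triple in $E$ or $E'$ is at most $\max\{k,k'\}=u-1$, the only $E$-type hyperedge of $J$ with last coordinate $u$ is $H=(u,u,u,u)$, and the only $E'$-type one is $H'=(1,u,u,u)$ — matching the remark in the text that $H,H'$ are the only hyperedges of $J$ with last coordinate $u$.

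The key step is the following dichotomy: \textbf{every perfect matching $M$ of $J$ consists entirely of $E$-type hyperedges, or entirely of $E'$-type hyperedges.} To see this, for $v\in\IZ_u$ let $\tau(v)$ be the first coordinate of the unique hyperedge of $M$ whose second coordinate is $v$. Since coordinates $1$ and $2$ both restrict to bijections of $\IZ_u$ on $M$, the map $\tau$ is a permutation of $\IZ_u$, and by the previous paragraph $\tau(v)\in\{v,\,v+_u 1\}$ for every $v$. A one-line induction shows that the only permutations of $\IZ_u$ with this property are the identity and the full cyclic shift $v\mapsto v+_u 1$; in the first case every hyperedge of $M$ is $E$-type, in the second every hyperedge is $E'$-type. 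I expect this dichotomy, together with the routine translation of the matching condition between dimensions $3$ and $4$, to be the only real content of the proof; everything else is bookkeeping.

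It then remains to assemble the bijection. If $M$ is a perfect matching of $J\setminus\{H'\}$, the hyperedge of $M$ carrying the value $u$ in the last coordinate must be $H$ (the unique remaining hyperedge of $J$ with that property), so $H\in M$; since $H$ is $E$-type, the dichotomy forces every hyperedge of $M$ to be $E$-type, hence $M=\{(x,x,y,z)\mid(x,y,z)\in N\}$ for a unique $u$-element set $N\subseteq E_u$, and $N$ is a perfect matching of $E_u$ exactly because $M$ is one (coordinates $1$ or $2$, $3$, and $4$ of $M$ correspond to coordinates $1$, $2$, $3$ of $N$). Conversely each perfect matching $N$ of $E_u$ produces such an $M$, and $M\subseteq J\setminus\{H'\}$ since $E$-type hyperedges are never equal to the $E'$-type hyperedge $H'$. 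This is the desired bijection, giving $\sharpIVDM(J\setminus\{H'\})=\sharpIIIDM(E_u)=\sharpIIIDM(E)$. Symmetrically, any perfect matching of $J\setminus\{H\}$ must contain the $E'$-type hyperedge $H'$, so by the dichotomy it consists entirely of $E'$-type hyperedges $(x+_u 1,x,y,z)$; as $v\mapsto v+_u 1$ is a bijection of $\IZ_u$, this yields a bijection with the perfect matchings of $E'_u$, whence $\sharpIVDM(J\setminus\{H\})=\sharpIIIDM(E')$.
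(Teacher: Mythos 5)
Your proof is correct and takes essentially the same approach as the paper: the heart of both arguments is the dichotomy that a perfect matching of $J$ must consist entirely of $E$-type or entirely of $E'$-type hyperedges, established by chasing first/second coordinates around $\IZ_u$ (you phrase it as a permutation with $\tau(v)\in\{v,v+_u 1\}$, the paper as a repeated exclusion argument, but these are the same). The only cosmetic difference is that you present the conclusion as a single explicit bijection, whereas the paper proves the two inequalities $\ge$ and $\le$ separately.
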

\begin{proof}
Clearly $\sharpIVDM(J\setminus\{H'\}) \geq \sharpIIIDM(E)$,
because a 3D-matching $M \subseteq E$ can be converted to a 4D-matching by converting each hyperedge $(x,y,z)$ to $(x,x,y,z)$, and adding the special hyperedge $(u,u,u,u)$.
Analogously one shows $\sharpIVDM(J\setminus\{H\}) \geq \sharpIIIDM(E')$.

The reverse is also true, which can be seen as follows.
If we restrict each hyperedge in $J$ to their first two coordinates, we get the so-called \emph{cycle graph} $C_{2u}$ on $2u$ vertices, which has the crucial property that $\sharpIIDM(C_{2u})=2$:

\begin{center}
\begin{tikzpicture}
\draw (0,4) -- (1,4);
\draw (0,3) -- (1,3);
\draw (0,2) -- (1,2);
\draw (0,0) -- (1,0);
\draw (1,4) -- (0,3);
\draw (1,3) -- (0,2);
\draw (1,2) -- (0,1);
\draw (1,1) -- (0,0);
\node[fill=orange!60, draw=black, circle, inner sep = 0cm, minimum size=0.2cm] at (0,4) {};
\node[fill=orange!60, draw=black, circle, inner sep = 0cm, minimum size=0.2cm] at (0,3) {};
\node[fill=orange!60, draw=black, circle, inner sep = 0cm, minimum size=0.2cm] at (0,2) {};
\node[fill=orange!60, draw=black, circle, inner sep = 0cm, minimum size=0.2cm] at (0,0) {};
\node[fill=red!80, draw=black, circle, inner sep = 0cm, minimum size=0.2cm] at (1,4) {};
\node[fill=red!80, draw=black, circle, inner sep = 0cm, minimum size=0.2cm] at (1,3) {};
\node[fill=red!80, draw=black, circle, inner sep = 0cm, minimum size=0.2cm] at (1,2) {};
\node[fill=red!80, draw=black, circle, inner sep = 0cm, minimum size=0.2cm] at (1,0) {};
\node[fill=white] at (0,1) {$\vdots$};
\node[fill=white] at (1,1) {$\vdots$};
\draw (1,0) -- (0,4);
\node[fill=orange!60, draw=black, circle, inner sep = 0cm, minimum size=0.2cm] at (0,4) {};
\node[fill=red!80, draw=black, circle, inner sep = 0cm, minimum size=0.2cm] at (1,0) {};
\end{tikzpicture}
\end{center}

Since every 4D-matching restricted to the first two coordinates is a 2D-matching on the restricted graph $C_{2u}$,
every 4D-matching contains either only hyperedges
with
(first,{}second) coordinate $(x,x)$
or only hyperedges with
with (first,{}second) coordinate $(x+_u 1,x)$.
Hence, if for a 4D-matching that contains $H$ we delete the first coordinates of all hyperedges in the matching, the result is a 3D-matching of $E_u$. Analogously for $H'$.
\end{proof}

\subsection{An auxiliary \problem{SetPartition} instance}
\label{subsec:auxsetpartition}

We follow the ideas of the proof of the strong NP-hardness of 4-partition in \cite[p.~96]{GJ79} rather closely.

For a hyperedge $e = (e_1,e_2,e_3) \in [k]^3$ let $\varphi(e)=\{(e_1,1),(e_2,2),(e_3,3)\}$ be the set of vertices covered by $e$ in depictions such as Figure~\ref{fig:join}.
Analogously for elements in $[k]^4$.
For $E \subseteq [k]^3$ let $\varphi(E) := \bigcup_{e\in E}\varphi(e)$.
Given two subsets $E\subseteq[k]^3$ and $E' \subseteq[k']^3$ with $\varphi(E)=[k]\times[3]$ and $\varphi(E')=[k']\times[3]$, i.e. the edges cover all vertices.
let $\join(E,E')=(J,H,H')$ with $J \subset [u]^4$.
Note that $\varphi(J)=[u]\times[4]$.
We now describe how from $(J,H,H')$ one constructs a \problem{SetPartition} instance
$(\a,\b)$, as described in \cite{GJ79}, with properties described in \Cref{lem:auxplacement}.

Given $(i,j)\in[u]\times[4]$, let $\mult_J(i,j):=\{e \in J \mid (i,j)\in \varphi(e)\}$ denote the number of hyperedges that that have value $i$ at position $j$, or, pictorially as in Figure~\ref{fig:join}, they contain the vertex $(i,j)$.
%

We streamline the construction in \cite{GJ79} a bit: We choose an $r \geq 2\cdot(\max\{5,u\}\cdot (4+5|J|) +1)$ (this exact bound has no further relevance) and write numbers in $r$-adic notation $[a_1,a_2,a_3,\ldots] := a_1 r + a_2 r^2 + a_3 r^3 + \ldots$ with the peculiarity that the constant term is zero (this will be changed in \S\ref{subsec:modified}).
Inside this notation we use the shorthand $0^j$ to denote a sequence of $j$ many zeros. We also use the shorthand
$\eta_j$ to denote the sequence $(0^{j-1},1,0^{4-j}) \in \{0,1\}^4$, i.e., the $j$-th standard basis vector in 4 dimensions.
We write $a\cdot\eta_j = (0^{j-1},a,0^{i-j}) \in \{0,a\}^4$.
We say that $a_i$ is the \emph{$i$-th coefficient}.
We will have $\forall i\neq 0: 0 \leq a_i< r$, and only in \S\ref{subsec:modified} we allow for exactly two negative $a_0$, for the ease of presentation.
We chose $r$ large enough such that all additions of numbers in $r$-adic notation in \S\ref{subsec:auxsetpartition} will have no $r$-adic carry.
Let $\beta(j):=4$ if $1\leq j \leq 3$ and $\beta(4):=0$.

Let there be $|J|$ many bins in this \problem{SetPartition} instance, and let all the bins have the same size, the bin size given by
$b_1:=[1,1,1,1,1,u,u,u,u,12]$, in other words $\b := (\underbrace{b_1,b_1,\ldots,b_1}_{|J| \text{ times}})$.
The items are created as follows.
\begin{itemize}
 \item For each $(i,j) \in [u] \times [4]$ we create an item of size
 $[\eta_j,0,i\cdot \eta_j,3]$.
  These are called \emph{real vertex items}.
 \item For each $(i,j) \in [u] \times [4]$ we create $\mult_J(i,j)-1$ many items of size $[\eta_j,0,i\cdot \eta_j,\beta(j)]$.
 These are called \emph{dummy vertex items}.
 Here we used that $\mult(i,j)\geq 1$, which is guaranteed, because $\varphi(J)=[u]\times[4]$.
 \item For each hyperedge $(w,x,y,z) \in J$ we create an item of size $[0^4,1,u-w,u-x,u-y,u-z,0]$.
  These are called \emph{hyperedge items}.
 \end{itemize}
This defines a vector $\a$ of item sizes. The number of items is exactly $5|J|$, which can be seen for example by pairing each hyperedge item with 4 vertex items corresponding to that hyperedge.
Moreover, $|\a|=|\b|=|J|\cdot b_1$, as we  can sum up the items in $\a$ coordinate-wise as follows
\begin{align*}
\sum_{i,j} \left( [\eta_j,0,i \eta_j,3]+ (\mult_J(i,j)-1) [\eta_j,0,i\eta_j,\beta(j)]\right) + \sum_{ (w,x,y,z) \in J }[0^4,1,u-w,u-x,u-y,u-z,0] 
\\
=\sum_{i,j} \mult_J(i,j) [\eta_j,0^6] + \sum_{i,j} \mult_J(i,j) [0^5, i \eta_j + (u-i) \eta_j,0]  \\ +  |J|[0^4,1,0^5] + \sum_{i,j} [0^9, 3+(\mult_J(i,j)-1)\beta(j)]\\
= \sum_j |J| [\eta_j,0^6] + |J| [0^4,1,0^5] + |J|\sum_j [0^5,u\eta_j,0] + \sum_j [0^9,(3-\beta(j)) u] + \sum_j |J| [0^9,\beta(j)] = |J|b_1 , 
\end{align*}
where in the first equation we do the summation for each coordinate separately. Then we note that for every $j$ the total number of appearances of $(i,j)$ in an hyperedge is always $|J|$ since the matching is 4-partitite, i.e. $\sum_i \mult_j(i,j) = |J|$. We also note that $\sum_j (3-\beta(j) ) = 3*4 - 4*3=0$.


\begin{ex}\label{ex:items}
From the matchings in Figure~\ref{fig:join} we have
\begin{align*}
J=\{&(1,1,2,2),(2,2,1,1),(2,2,2,1),(3,3,3,3),(4,4,4,4),\\&(2,1,1,2),(2,1,2,3),(3,2,3,1),(4,3,1,3),(1,4,4,4)\}.
\end{align*}

The real vertex items we construct are
\begin{align*}
V=\{ &[1,0,0,0,0,i,0,0,0,3], [0,1,0,0,0,0,i,0,0,3],\\&[0,0,1,0,0,0,0,i,0,3],[0,0,0,1,0,0,0,0,i,3] \mid i=1,2,3,4\}
\end{align*}
and the dummy vertex items are of the form
$$ [1,0,0,0,0,i,0,0,0,4], [0,1,0,0,0,0,i,0,0,4],[0,0,1,0,0,0,0,i,0,4],[0,0,0,1,0,0,0,0,i,0]$$ 
for $i=1,2,3,4$,
but are repeated with the corresponding multiplicities given by $\mult_J(i,1)-1=1$ for $i=1,2,3,4$, also $\mult_J(1,2)-1=2$, $\mult_J(1,3)-1=2$, $\mult_J(1,4)-1=2$, $\mult_J(2,2)-1=2$, $\mult_J(2,3)-1=2$, $\mult_J(2,4)-1=1$, $\mult_J(3,2)-1=1$, $\mult_J(3,3)-1=1$, $\mult_J(3,4)-1=2$ and finally $\mult_J(4,j)-1=1$ for all $j$. The hyperedge items corresponding directly to the hyperedges listed in $J$ are as follows. From hyperedge $(1,1,2,2)$ we have item $[0,0,0,0,1,4-1,4-1,4-2,4-2,0]$, and similarly obtain the rest as
\begin{align*}
E=\{&[0^4,1,3,3,2,2,0],[0^4,1,2,2,3,3,0],[0^4,1,2,2,2,3,0],[0^4,1,1,1,1,1,0],[0^4,1,0,0,0,0,0],\\
&[0^4,1,2,3,3,2,0],[0^4,1,2,3,2,1,0],[0^4,1,1,2,1,3,0],[0^4,1,0,1,3,1,0],[0^4,1,3,0,0,0,0]\}
\end{align*}
We have $b_1=[1,1,1,1,1,4,4,4,4,12]$, $|J|=10$ and the bins are $\b = (b_1^{10})$.
\end{ex}

Up to this point, this was not different from \cite{GJ79}.
The real vertex item to $(i,j)=(u,4)$ (the vertex in the bottom right in Figure~\ref{fig:join}) is called the \emph{special vertex item}. By construction, it has size $[0,0,0,1,0,0,0,u,0,3]$ and is the unique item of this size.
The item to hyperedge $H=(u,u,u,u)$ is called the \emph{first special hyperedge item}.
The item to hyperedge $H'=(1,u,u,u)$ is called the \emph{second special hyperedge item}.
These two items are also the unique items with their respective sizes.

Note that the value of every coefficient is nonnegative and at most $\max\{4,u\}$.
Let
$$\delta  :=  |J|! \, \cdot \prod_{(i,j)\in[u]\times[4]}\big(\mult(i,j)-1\big)!
$$

\begin{lemma}
\label{lem:auxplacement}
In every $\vv K \in \cP(\a,\b)$, the special vertex item is put in a bin with either the first special hyperedge item or the second special hyperedge item, but not with both at the same time.
Let $\cP(\a,\b)_0$ be the subset of those $\vv K$ for which
the special vertex item is put in a bin with the first special hyperedge item,
and let $\cP(\a,\b)_1 = \cP(\a,\b)\setminus\cP(\a,\b)_0$.
Then
\[
\tfrac 1 \delta  \cdot  \big|\cP(\a,\b)_0\big|  =  \sharpIVDM\big(J\setminus\{H'\}\big) \quad\text{ and }\quad \tfrac 1 \delta
 \cdot  \big|\cP(\a,\b)_1\big|  =  \sharpIVDM\big(J\setminus\{H\}\big).
\]
\end{lemma}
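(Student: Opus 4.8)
The plan is to exploit the enormous radix $r$. Since $r > 5|J|\cdot\max\{4,u\}$, there are at most $5|J|$ items in total, and every $r^p$-coefficient of every item lies in $\{0,1,\ldots,\max\{4,u\}\}$, no carries occur when the sizes of the items lying in a single bin are summed. Hence, for any $\vv K\in\cP(\a,\b)$ and any bin $K_i$, the condition $\sum_{j\in K_i}a_j = b_1$ is equivalent to the ten separate equations obtained by equating the $r^p$-coefficients of both sides, $p=1,\ldots,10$. First I would read these equations off one coefficient at a time. The $5$-th coefficient of $b_1$ is $1$, and only hyperedge items have a nonzero ($=1$) $5$-th coefficient, so every bin contains exactly one hyperedge item; as there are $|J|$ hyperedge items and $|J|$ bins, this is a bijection between hyperedge items and bins. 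Coefficients $1,2,3,4$ of $b_1$ are each $1$, and only vertex items of ``coordinate'' $j$ have a nonzero $j$-th coefficient, so every bin contains exactly one vertex item of each coordinate $j\in\IZ_4$; thus every bin has exactly four vertex items and exactly five items total. Coefficients $6,7,8,9$ then pin those vertex items down: if a bin holds the hyperedge item of $(w,x,y,z)$, its coordinate-$j$ vertex item has first entry $w,x,y,z$ for $j=1,2,3,4$ respectively, because $(u-w)+i=u$ forces $i=w$, and so on.

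Next I would use the $10$-th coefficient. Writing $\epsilon_j=1$ if the coordinate-$j$ vertex item of a bin is a dummy item and $\epsilon_j=0$ if it is a real item, the $10$-th coefficient equation of that bin becomes $(3+\epsilon_1)+(3+\epsilon_2)+(3+\epsilon_3)+(3-3\epsilon_4)=12$, i.e.\ $\epsilon_1+\epsilon_2+\epsilon_3 = 3\epsilon_4$; since each $\epsilon_j\in\{0,1\}$, this means either all four vertex items of the bin are real or all four are dummy. Call a bin \emph{real} or \emph{dummy} accordingly. Because each type $(i,j)\in\IZ_u\times\IZ_4$ has a unique real vertex item and every real vertex item lies in a real bin, there are exactly $u$ real bins, and the hyperedges labelling them form a perfect $4$D-matching $M$ of $J$ (of size $u$, covering $\IZ_u^4$, since all $4u$ real items get used); conversely every perfect $4$D-matching of $J$ is realized this way. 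The special vertex item is the real item of type $(u,4)$, so it lies in the real bin whose hyperedge has last entry $u$; the only hyperedges of $J$ with last entry $u$ are $H=(u,u,u,u)$ and $H'=(1,u,u,u)$, and the bin in question contains exactly one hyperedge item. This establishes the first assertion of the lemma, and shows $\vv K\in\cP(\a,\b)_0\iff H\in M$ and $\vv K\in\cP(\a,\b)_1\iff H'\in M$.

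Then comes the counting. I would show that for each fixed perfect $4$D-matching $M$ of $J$, exactly $\delta$ partitions $\vv K\in\cP(\a,\b)$ have $M$ as their set of real-bin hyperedges. There are $|J|!$ ways to pick the bijection between hyperedge items and bins; once it is fixed, each real bin is forced (its four vertex items must be the unique real items of the prescribed types), while a dummy bin with hyperedge $(w,x,y,z)$ must be filled with one dummy item of each of the types $(w,1),(x,2),(y,3),(z,4)$. For each $(i,j)$, the $\mult(i,j)-1$ dummy items of type $(i,j)$ are distributed bijectively among the dummy bins whose hyperedge has $j$-th entry $i$, of which there are exactly $\mult(i,j)-1$ since $M$ uses exactly one hyperedge with $j$-th entry $i$, giving $(\mult(i,j)-1)!$ choices, independently over all $(i,j)$. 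Multiplying, the total is $|J|!\cdot\prod_{(i,j)\in\IZ_u\times\IZ_4}(\mult(i,j)-1)! = \delta$, independently of $M$. Finally, a perfect $4$D-matching of $J$ with $H\notin M$ is the same thing as a size-$u$ matching of $J\setminus\{H\}$: such a matching must contain a hyperedge with last entry $u$, which can only be $H'$, so it automatically satisfies $H'\in M$; symmetrically with $H$ and $H'$ swapped. Therefore $|\cP(\a,\b)_0| = \delta\cdot\sharpIVDM(J\setminus\{H'\})$ and $|\cP(\a,\b)_1| = \delta\cdot\sharpIVDM(J\setminus\{H\})$, as claimed.

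The step I expect to be the main obstacle is the counting in the last paragraph: one has to verify carefully that the \emph{only} remaining degrees of freedom after the coefficient analysis are the labelling of the $|J|$ bins and the allocation of the equal-sized (but distinguishable) dummy items, that this allocation factors cleanly as a product over the types $(i,j)$ with the right factorial counts, and hence that the multiplicity $\delta$ is the same for every $M$. The rest — the no-carry principle and the ten coefficient equations — is essentially a mechanical unwinding of the definitions in $\S$\ref{subsec:auxsetpartition}.
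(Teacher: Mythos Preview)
Your proof is correct and follows essentially the same approach as the paper's own proof: both exploit the largeness of $r$ to read off the bin contents coefficient-by-coefficient, use the $10$th coefficient to force each bin to be all-real or all-dummy, identify the real bins with a perfect $4$D-matching of $J$, and count the $\delta$-fold multiplicity via the bin permutations and dummy-item permutations. You spell out the no-carry argument, the $10$th-coefficient equation $\epsilon_1+\epsilon_2+\epsilon_3=3\epsilon_4$, and the factorization of the $\delta$ count more explicitly than the paper does, but there is no substantive difference in strategy.
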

\begin{proof}
Since $r$ is large and the size of the bins is $[1,1,1,1,1,u,u,u,u,12]$, a solution $\vv K$ to the instance must place exactly 5 items in every bin: One hyperedge item and four vertex items for some vertices $(i,j)\in[u]\times[4]$, one for each $j \in [4]$.
Moreover, since $r$ is large and since the 10th coefficient of the bin size is 12,
in a solution we must have that
in each bin the four vertex items are either all dummy vertex items or all real vertex items (because 12 can be written as $12=4+4+4+0=3+3+3+3$ if only the summands $0,3,4$ are available).

Now, since the \big(6th,{}7th,{}8th,{}9th\big) coordinates of a hyperedge item are $(u-w,u-x,u-y,u-z)$,
we conclude that each hyperedge $e$ must be placed together with vertex items for its four vertices from $\varphi(e)$ (real or dummy vertex items).
From a placement $\vv K$ like this we can create a solution to \sharpIVDM($J$) by selecting exactly those hyperedges that are in a bin with real vertex items.

In fact, there are $\delta$ many different placements $\vv K$ that result in the same 4D-matching: The bins can be permuted, and for each vertex the dummy vertex items can be permuted.
And vice versa: From a solution to \sharpIVDM($J$) we create $\delta$ many placements $\vv K$ of items by grouping the selected hyperedges together with their real vertex items, and grouping the unselected hyperedges together with their dummy vertex items.

These operations are inverses of each other, which gives a bijection between the set of 4D-matchings of $J$ and the set of cardinality $\delta$ subsets of $\cP(\a,\b)$ in which all elements arise from each other by permuting the bins and the dummy vertices.
Now \Cref{lem:joinminusH} implies the result.
\end{proof}

\begin{ex}
Continue with the example from Figure~\ref{fig:join} and Example~\ref{ex:items}. Since each bin has 5th coordinate equal to 1, and the only items with 5th coordinate equal to 1 are the hyperedge items, we must have one hyperedge item per bin. Looking at the first 4 coordinates all equal to 1, we must have exactly one vertex item for each $j=1,2,3,4$. As the last coordinate is 12, the only way to achieve it from the $3$s and $4$s is $12=3+3+3+3$, so 4 real vertices, or $12=4+4+4+0$, 4 dummy vertices. Hence, the items sum up to $b_1$ exactly when the 4 vertices are part of a hyperedge. So in a set partition instance we would have, for example, the following bin:
\begin{align*}[0,0,0,0,1,3,3,2,2,0]+& [1,0,0,0,0,1,0,0,0,3]+[0,1,0,0,0,0,1,0,0,3]\\
+&[0,0,1,0,0,0,0,2,0,3]+[0,0,0,1,0,0,0,0,2,3]=[1,1,1,1,1,4,4,4,4,12].
\end{align*}
\end{ex}

\subsection{The Modified \problem{SetPartition} Instance}
\label{subsec:modified}
We modify the item vector $\a$ from the construction above,
to obtain a vector $\c$ as follows.

\begin{itemize}
\item We add four items of sizes 1,2,4,5.
\item We increase the size of the special vertex item by 1. We decrease the size of the first special hyperedge item by 5.
We decrease the size of the second special hyperedge item by 2.
\end{itemize}
These seven items are the only items whose sizes are not divisible by~$r$.
W.l.o.g. let the special vertex item, the first special hyperedge item, and the second special hyperedge item be the first three item sizes in $\a$.
Then
$\c := (1,2,4,5,a_1+1,a_2-5,a_3-2,a_4,a_5,\ldots)$.
Let $\d := \b$.
We have $|\c|=|\d|+6$.
Finally, denote $\setpartition(J,H,H') := (\c,\d,\delta)$.
This completes the construction process we started in~$\S$\ref{subsec:auxsetpartition}.

\begin{lemma}
\label{lem:deltaPcd}
$\frac 1 \delta \rP(\c,(2,4,\d)) = \sharpIVDM(J\setminus\{H'\})$ \ and \
$\frac 1 \delta \rP(\c,(1,5,\d)) = \sharpIVDM(J\setminus\{H\})$.
\end{lemma}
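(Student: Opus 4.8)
The plan is to reduce \Cref{lem:deltaPcd} to \Cref{lem:auxplacement} by building, for each of the two identities, an explicit bijection between the ordered set partitions of the modified instance and the placements already counted there. Since the multiplicity $\delta$ is built into \Cref{lem:auxplacement}, it suffices to prove $\cP(\c,\ov\d)\cong\cP(\a,\b)_\varepsilon$ and $\cP(\c,\ov\d')\cong\cP(\a,\b)_{1-\varepsilon}$ for the appropriate $\varepsilon\in\{0,1\}$; combined with $\tfrac1\delta|\cP(\a,\b)_0|=\sharpIVDM(J\setminus\{H'\})$ and $\tfrac1\delta|\cP(\a,\b)_1|=\sharpIVDM(J\setminus\{H\})$ this yields the two stated equalities.

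First I would locate the four tiny new items of sizes $1,2,4,5$. Every original item has base-$r$ digits bounded by $\max\{4,u\}$, and the three resized special items still have size much larger than $r$, so the new ones are the only items of size below $r$. Hence each new bin — of sizes $\{2,4\}$ for $\ov\d$, resp.\ $\{1,5\}$ for $\ov\d'$ — can only be filled by new items, and a one-line size check forces the new bin of size $s$ to hold exactly the new item of size $s$. That leaves exactly two new items for the $|J|$ old bins. A generic old bin (one whose hyperedge item is not special) is untouched by the modification and all its items have $r^0$-digit zero; since in such a bin digit sums stay below $r$ (the no-carry regime of \Cref{lem:auxplacement}) and $b_1$ has $r^0$-digit zero, it cannot absorb a new item. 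So the two leftover new items must land in the (at most two) old bins that hold the resized special hyperedge items.

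The rest is weight bookkeeping. The special vertex item is still forced to share its bin with one of the two special hyperedge items (its digits in positions $6,\dots,9$, which impose this, are unchanged). Its size went up by $1$, while the two special hyperedge items went down by $5$ and by $2$; hence the bin holding each resized special hyperedge item carries a fixed deficit ($-5$ or $-2$, shifted by $+1$ if the special vertex item is there too) that the tiny item(s) dropped into it must exactly cancel. Among the two available tiny values only one assignment of tiny items to the two special bins does this, and it simultaneously pins down which special hyperedge item the special vertex item joins. Deleting the two new bins, removing the two tiny items from the special bins, and undoing the three size changes then yields a bona fide element of $\cP(\a,\b)_0$ or $\cP(\a,\b)_1$; the construction is visibly invertible, so it is a bijection.

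The one delicate point, and essentially all of the work, is checking the shape of the one or two bins containing a resized special hyperedge item. In the unmodified instance no subset sum ever carries in base $r$, which makes a legal bin rigid (exactly one hyperedge item and one vertex item per coordinate $j\in\{1,2,3,4\}$). But $r^5-5$ and $r^5+(u-1)r^6-2$ have digits as large as $r-1$, so in those bins the no-carry argument breaks and one must run an explicit digit-and-carry computation to confirm that such a bin still consists of one resized hyperedge item, its four matching vertex items (all real or all dummy, as the $r^{10}$-digit $12$ dictates), and one tiny new item — and, in parallel, that the tiny new items can neither combine in an unintended way nor slip into a generic bin. Once this local analysis is in place, the bookkeeping above together with \Cref{lem:auxplacement} closes the proof.
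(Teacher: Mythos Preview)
Your proposal is correct and follows essentially the same route as the paper: pin the two tiny bins to their matching tiny items, show that the two leftover tiny items together with the $+1,-5,-2$ offsets force the special vertex item into a specific special-hyperedge bin, and then invoke \Cref{lem:auxplacement}. The one overcomplication is the ``explicit digit-and-carry computation'' you flag as the crux: the paper sidesteps it by writing each modified item as (original multiple of~$r$) plus a signed offset in $\{-5,-2,+1,1,2,4,5\}$, so the bin equation decouples into the unchanged digit-by-digit analysis of \Cref{lem:auxplacement} in positions $r^1,\dots,r^{10}$ (hence the structural restrictions carry over verbatim) together with the requirement that the offsets in each old bin sum to zero --- since the total absolute offset is below~$r$, no carry ever occurs and no base-$r$ rewriting of $r^5-5$ is needed.
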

\begin{proof}
The restrictions in the proof of \Cref{lem:auxplacement} still directly apply, because we only made small changes to the item sizes, which all were multiples of $r$, and $r$ is large.
The new items and the changed item sizes give exactly the additional constraints that the item sizes in each bin should add up to a number that is divisible by~$r$.

In $\rP(\c,(2,4,\d))$, the bin of size 2 must be filled with the item of size~2, and thus the bin of size~4 must be filled with the item left of size~4.
The special vertex item and the item of size 1 are placed with the second special hyperedge (because the item of size 2 has already been placed). The item of size~5 is placed with the first special hyperedge.
The remaining placements of items can be done as in the proof of \Cref{lem:auxplacement}.

In $\rP(\c,(1,5,\d))$, the bin of size 1 must be filled with the item of size~1, and the bin of size~5 must contain a small ($\leq 5$) odd item, but the only such item left is the item of size~5.
The parity now implies that the special vertex item is placed in a bin with the first special hyperedge item. The only two remaining small items of sizes 2 and~4 fill up the bins of the special hyperedge items.
The remaining placements of items can be done as in the proof of \Cref{lem:auxplacement}.
\end{proof}

\subsection{Putting the Pieces Together}

\begin{proof}[Proof of \Cref{thm:CeqPhardnessofchar} and \Cref{thm:PH-char}]
Recall that $\CeqP = [\GapP=0]$ and  $\PP = \mbox{$[\GapP\geq 0]$}$.
We prove both theorems simultaneously, so fix a comparison operator $\sim\ \in\{=,\geq\}$.

For every $L \in \CeqP$ there exist $F \in \sharpP$ and $F'\in\sharpP$ with $w\in L$ if and only if $F(w)\sim F'(w)$.
By the Cook--Levin theorem, there exists a polynomial-time algorithm
that on input $w$ outputs a Boolean circuit $C_w$ such that $F(w) = \problem{\#CircuitSAT}(C_w)$.
Analogously, there exists a polynomial-time algorithm
that on input $w$ outputs a Boolean circuit $C'_w$ such that $F'(w) = \problem{\#CircuitSAT}(C'_w)$.

Let $E := R(C_w)$ and $E':=R(C'_w)$, where $R$ is
the reduction from $\problem{\#CircuitSAT}$ to $\sharpIIIDM$,
defined as in~$\S$\ref{ss:3DM}.
Let $(J,H,H') := \join(E,E')$.
Let $(\c,\d,\delta) := \setpartition(J,H,H')$ as defined in~\S\ref{subsec:modified}.
Let $\la$ and $\alpha$ be from \Cref{p:char-part3}.
We have:
\begin{eqnarray*}
F(w) \sim F'(w)
&\Longleftrightarrow&
\problem{\#CircuitSAT}(C_w) \sim \problem{\#CircuitSAT}(C'_w)
\\
&\Longleftrightarrow&
\sharpIIIDM(E)\sim\sharpIIIDM(E')
\\
&\stackrel{\textup{Lem.~}\ref{lem:joinminusH}}{\Longleftrightarrow}&
\sharpIVDM(J\setminus\{H'\})\sim\sharpIVDM(J\setminus\{H\})
\\
&\stackrel{\textup{Lem.}~\ref{lem:deltaPcd}}{\Longleftrightarrow}&
\tfrac{1}{\delta}\rP(\c, (2,4,\d))\sim\tfrac{1}{\delta}\rP(\c, (1,5,\d))
\\
&\Longleftrightarrow&
\rP(\c, (2,4,\d)) \sim \rP(\c, (1,5,\d))
\\
&\stackrel{\textup{Pro.}~\ref{p:char-part3}}{\Longleftrightarrow}&
\chi^\la(\al) \sim 0.
\end{eqnarray*}
This completes the proof of both theorems.
\end{proof}

\begin{proof}[Proof of \Cref{thm:main}]
Combine \Cref{thm:CeqPhardnessofchar} and
\Cref{pro:collapse}.
\end{proof}

\medskip

\section{Additional Remarks}

\subsection{Combinatorial interpretations}\label{ss:finrem-Kron}
Finding positive combinatorial interpretations for the Kronecker, plethysm
and Schubert coefficients remains a central open problem in Algebraic Combinatorics.
Special cases for the Kronecker coefficients have been studied in
\cite{BO, BOR, Bla17, IMW,PP_strict,RW}, among many others.
Combinatorial interpretations for plethysm coefficients have been
even harder to find, see \cite{BBP,DIP,FI20} for some special cases.

For the Schubert coefficients, see \cite{Knu,KZ,Man,MPP}
for positive combinatorial interpretations in several special cases,
and \cite{ARY} for complexity of a related problem.
For the row character sums~$a_\la$ defined in~\eqref{eq:row-column-sums},
Frumkin \cite{Fru} proved that $a_\la\ge 1$ for all $|\la|>1$.
See also \cite{Sol} for a generalization to all finite groups.
We refer to \cite{KW} for a combinatorial interpretation of
an ingredient in the sum in~\eqref{eq:row-column-sums}, and to
\cite[p.~323]{Sun} for a connection to plethysm coefficients.
For the column character sums~$b_\la$, see \cite[Exc~7.69]{EC2}
and references therein.

\subsection{Unary vs binary input}
\label{ss:fin_rem_char_compl}
Our results are independent of the input encoding in the following sense:
the description size of $(\la,\pi)$ and $(\la,\mu)$ can differ exponentially
if $\mu$ is provided as a list of integers that are encoded in binary.
Our results hold in both of these settings.
It is noteworthy that such results do not exist for other quantities of
interest, for example the Kostka numbers, Littlewood--Richardson and
Kronecker coefficients, and the Schubert structure constants.

Narayanan \cite{Nar} proved that computing the \emph{Kostka coefficients} $K_{\la\mu}$ and
the LR-coefficients $c^{\la}_{\mu\nu}$ are $\sharpP$-complete when
the inputs $\la,\mu,\nu$ are encoded in binary.  It was conjectured in
\cite[Conj.\ 8.1]{PP_comp} that the LR-coefficients are $\sharpP$-complete
in unary.\footnote{The distinction between unary and binary input was underscored
in \cite{GJ78}.  Unfortunately, the original naming of ``strong''
vs.\ (the usual) ``weak'' $\NP$-completeness added to the confusion, and is best
to be avoided. }

We should note however, that the decision problems $[K_{\la\mu}=0]$ and $[c^{\la}_{\mu\nu}=0]$
are in $\P$ even when the input is binary. The first one reduces to checking the linear inequalities whether $\la \vartriangleright \mu$ in the dominance order. By the Knutson--Tao \emph{saturation theorem}~\cite{KT1}, the vanishing of
LR-coefficients reduces to checking if the \emph{Gelfand--Tsetlin polytope} is empty, see \cite{BI,MNS,DM06}.

The unary hardness of the counting problems would imply that the Schubert coefficients
are also $\sharpP$-hard to compute. Indeed, the natural encoding for the Schubert coefficients,
when the inputs are permutations, is in unary. On the other hand, the LR-coefficients are
special cases of the Schubert coefficients, but so far $\sharpP$-completeness is only known when $\la,\mu,\nu$ are encoded in binary.
Thus, we cannot yet conclude the computational hardness result.\footnote{This argument
points out the error in \cite[p.~885]{MQ} which concludes that Schubert coefficients
are $\sharpP$-hard.}
By contrast, the Kronecker coefficients of the symmetric group $g(\la,\mu,\nu)$ are
 $\sharpP$-hard with input in unary; this follows form the proof in~\cite{IMW} that
 vanishing of $g(\la,\mu,\nu)$ is $\NP$-hard in unary.

\subsection{GapP-completeness and parsimonious reductions}
To emphasize the difference, consider the following two problems:

\medskip

\noindent\begin{minipage}{\textwidth}
Problem  $\problem{ComputeCharUnary}$:
\begin{compactitem}
 \item Input: An integer~$n$, and partitions $\la,\mu\vdash n$, as lists of numbers encoded in unary
 \item Output: $\chi^\la(\mu)$
\end{compactitem}
\end{minipage}

\medskip

\noindent\begin{minipage}{\textwidth}
Problem  $\problem{ComputeCharBinary}$:
\begin{compactitem}
 \item Input: An integer~$n$, and partitions $\la,\mu\vdash n$, as lists of numbers encoded in binary
 \item Output: $\chi^\la(\mu)$
\end{compactitem}
\end{minipage}

\medskip

\noindent
As we mentioned in the introduction, Hepler \cite{Hep} proved that computing $\chi^\la(\mu)$ is
$\sharpP$-hard in unary, and thus in binary.\footnote{In \cite{PP_comp}, the second and third authors made
erroneous claims on this point.}
The following result has not been observed before,
but follows directly from \Cref{p:char-part}:

\begin{theorem}
\label{thm:strongGapPhard}
The problem \problem{ComputeCharBinary} is $\GapP$-complete under Turing reductions.
\end{theorem}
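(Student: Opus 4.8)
The plan is to prove completeness via two containments: that the function $(\la,\mu)\mapsto\chi^\la(\mu)$ with $\la,\mu$ encoded in binary already lies in $\GapP$ (hence is trivially Turing-reducible to $\GapP$), and that every function in $\GapP$ is polynomial-time Turing-reducible to \problem{ComputeCharBinary}. For the first, I would argue directly from \Cref{p:char-part}. When $\la,\mu\vdash n$ are presented as lists of binary integers, the numbers of parts $\ell(\la)$ and $\ell(\mu)$ are bounded by the input length and are thus polynomial; only the parts themselves (and $n$) can be exponentially large. Taking $\al=\mu$ in \Cref{p:char-part} gives
\[
\chi^\la(\mu)\ =\ \sum_{\sigma\in\aS_{\ell(\la)}}\sgn(\sigma)\,\rP\big(\mu,\la+\sigma-\id\big),
\]
a sum of $\ell(\la)!$ terms. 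The point is that a nondeterministic machine may have exponentially many computation paths, so this sum can be folded into the nondeterminism: a pair $(\sigma,\vv K)$ consisting of a permutation $\sigma\in\aS_{\ell(\la)}$ and an ordered set partition $\vv K\in\cP(\mu,\la+\sigma-\id)$ has a polynomial-size encoding, and checking $\vv K\in\cP(\mu,\la+\sigma-\id)$ (respecting the conventions $\phi^{\a}=0$ if some $a_i<0$ and $\phi^{(\a,0,\b)}=\phi^{(\a,\b)}$) together with computing $\sgn(\sigma)$ takes polynomial time. Counting such pairs with $\sgn(\sigma)=+1$ and with $\sgn(\sigma)=-1$ produces two $\sharpP$ functions whose difference is $\chi^\la(\mu)$; hence $\chi^\la(\mu)\in\GapP$, and in particular $|\chi^\la(\mu)|$ has only polynomially many bits, so the output of \problem{ComputeCharBinary} is a well-defined integer of polynomial bit length.

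For hardness I would reuse, essentially verbatim, the reduction built in the proof of \Cref{thm:CeqPhardnessofchar}. Given an arbitrary $g\in\GapP$, write $g=F-F'$ with $F,F'\in\sharpP$; by the Cook--Levin theorem produce in polynomial time circuits $C_w,C_w'$ with $\problem{\#CircuitSAT}(C_w)=F(w)$ and $\problem{\#CircuitSAT}(C_w')=F'(w)$; then set $E:=R(C_w)$, $E':=R(C_w')$, $(J,H,H'):=\join(E,E')$, $(\c,\d,\delta):=\setpartition(J,H,H')$, and take $\la,\al$ from \Cref{p:char-part3}. Chaining \Cref{p:char-part3}, \Cref{lem:deltaPcd}, \Cref{lem:joinminusH}, and parsimoniousness of $R$ yields
\[
\chi^\la(\al)\ =\ \rP(\c,\ov\d)-\rP(\c,\ov{\d'})\ =\ \delta\cdot\big(\sharpIIIDM(E)-\sharpIIIDM(E')\big)\ =\ \delta\cdot\big(F(w)-F'(w)\big)\ =\ \delta\cdot g(w).
\]
Since $\delta=|J|!\cdot\prod_{(i,j)\in\IZ_u\times\IZ_4}(\mult(i,j)-1)!$ is a product of factorials of polynomially large integers, it has polynomially many bits and is polynomial-time computable; and $\chi^\la(\al)=\chi^\la(\sort(\al))$ with $|\la|=|\al|$ polynomially bounded, so $\big(|\la|,\la,\sort(\al)\big)$ is a legal input to \problem{ComputeCharBinary}. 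Hence $g(w)=\problem{ComputeCharBinary}\big(|\la|,\la,\sort(\al)\big)/\delta$, a polynomial-time Turing reduction making a single oracle query.

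I expect the only genuine obstacle to be conceptual rather than computational: explaining why the statement is phrased with Turing rather than many-one reductions. The culprit is the spurious multiplicative factor $\delta$ produced by the \problem{SetPartition} encoding of \S\ref{subsec:auxsetpartition}--\S\ref{subsec:modified}; the division by $\delta$ cannot be absorbed into $\sharpP$ or $\GapP$, since those classes are not known to be closed under division by a given integer even when the quotient is guaranteed integral --- the same phenomenon that makes the permanent $\sharpP$-complete under Turing but not under parsimonious reductions. Everything else --- the polynomial bounds on $\ell(\la)$, on the bit length of $\delta$, and on the sizes of all intermediate objects --- is bookkeeping already carried out in the proof of \Cref{thm:CeqPhardnessofchar}.
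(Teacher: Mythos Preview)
Your proposal is correct. The membership argument via \Cref{p:char-part} is exactly what the paper intends by ``follows directly from \Cref{p:char-part}'': the pair $(\sigma,\vv K)$ has polynomial size because $\ell(\la)$ and $\ell(\mu)$ are bounded by the input length even in binary, and the verification is a handful of additions of polynomially many integers with polynomially many bits.

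For hardness the paper is terser than you are. Its one-sentence justification, read together with the sentence immediately preceding the theorem, seems to rely on Hepler's $\sharpP$-hardness (in unary, hence in binary): since $\GapP\subseteq\FP^{\sharpP}$, any $\sharpP$-hard function under Turing reductions is automatically $\GapP$-hard under Turing reductions. You instead rederive hardness from the paper's own Section~4 pipeline, obtaining $\chi^\la(\al)=\delta\cdot g(w)$ and dividing out $\delta$. Both routes are valid; yours is self-contained and, in fact, is precisely the reduction the paper has in mind when it explains why the factor~$\delta$ blocks an upgrade to many-one reductions. Your concluding paragraph about why Turing reductions are needed captures the paper's own remark following the theorem.
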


We note that we cannot at this point strengthen the result to parsimonious many-one reductions, because the reduction from matchings to counting ordered set partitions is itself not parsimonious, having the factor of~$\delta$.

\begin{conjecture}
\label{conj:strongGapPhard}
The problem \problem{ComputeCharBinary} is $\GapP$-complete under many-one reductions.
\end{conjecture}

We should note though that the reduction from \problem{\#SetPartition} to \problem{ComputeCharUnary}  is parsimonious from the following, see also~\cite{Hep} for a closely related reduction:

\begin{proposition}
Let $\a$ and $\b$ be two positive sequences with equal sums, and let $\b$ have $p-1$ many parts. Let $\la=p\,\sort(\b)$ and $\al = p\a$. Then
$$\chi^\la(\al) = \rP(\a,\b).$$
\end{proposition}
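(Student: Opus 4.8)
The plan is to apply \Cref{p:char-part} and then collapse the resulting alternating sum to a single term by a divisibility argument, after which the surviving term is identified with $\rP(\a,\b)$ by rescaling.

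First, since $\b$ is a positive sequence with $p-1$ parts, $\sort(\b)$ is a partition with exactly $p-1$ positive parts, so $\la = p\,\sort(\b)$ is a partition of $n := p|\b| = p|\a|$ with $\ell(\la) = p-1$, and $\al = p\a$ is a composition of $n$. Hence \Cref{p:char-part} applies and yields
\[
\chi^\la(\al) \;=\; \sum_{\sigma \in \aS_{p-1}} \sgn(\sigma)\,\rP\bigl(\al,\,\la+\sigma-\id\bigr).
\]
Next I would argue that every summand with $\sigma \neq \id$ vanishes. Each item size $\al_j = p\,a_j$ is divisible by~$p$, so in any ordered set partition counted by $\rP(\al,\nu)$ every bin total is a sum of multiples of $p$ and hence divisible by~$p$; consequently $\rP(\al,\nu)=0$ whenever some entry of $\nu$ is not a multiple of~$p$. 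Since every entry of $\la = p\,\sort(\b)$ is a multiple of $p$, the $i$-th entry of $\la+\sigma-\id$ is $\equiv \sigma_i - i \pmod p$, so a nonzero summand forces $\sigma_i \equiv i \pmod p$ for all $i\in[p-1]$; as $1\le\sigma_i,\,i\le p-1$, this is possible only when $\sigma_i = i$ for every $i$, i.e. $\sigma=\id$. (In fact each entry of $\la+\sigma-\id$ is positive, since $\la_i \ge p$ and $\sigma_i - i \ge 2-p$, so no negativity convention is even invoked.) Thus $\chi^\la(\al) = \sgn(\id)\,\rP(\al,\la) = \rP\bigl(p\a,\,p\,\sort(\b)\bigr)$.

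It remains to see that $\rP\bigl(p\a,\,p\,\sort(\b)\bigr) = \rP(\a,\b)$. Reordering the entries of the bin-size vector induces a bijection on ordered set partitions by relabelling the bins, so $\rP\bigl(p\a,\,p\,\sort(\b)\bigr) = \rP(p\a,\,p\b)$; and scaling all item sizes and all bin sizes by the common factor $p$ leaves the conditions in \eqref{eq:setpartitionconditions} unchanged, so $\cP(p\a,p\b) = \cP(\a,\b)$ as sets, whence $\rP(p\a,p\b) = \rP(\a,\b)$. Chaining the displayed equalities gives $\chi^\la(\al) = \rP(\a,\b)$. There is no serious obstacle in this argument; the one step that needs genuine care is the divisibility collapse — in particular verifying that the bounds $1 \le \sigma_i,\, i \le p-1$ truly force $\sigma = \id$ once $\sigma_i \equiv i \pmod p$, and that the $\sigma\neq\id$ contributions really are identically zero rather than merely small.
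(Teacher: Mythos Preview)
Your proof is correct and follows essentially the same approach as the paper: apply \Cref{p:char-part}, use that every item size in $\al=p\a$ is divisible by $p$ to force $\sigma_i\equiv i\pmod p$ and hence $\sigma=\id$ (since $1\le \sigma_i,i\le p-1$), leaving the single term $\rP(\al,\la)=\rP(\a,\b)$. You simply spell out more carefully the steps the paper leaves implicit, namely the positivity of the bin sizes, the reduction of the congruence to equality, and the invariance of $\rP$ under rescaling and reordering of the bins.
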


The proof follows directly from applying Proposition~\ref{p:char-part} and observing that since all sizes~$\al$ are divisible by~$p$, we must have that $p$ divides $(\la_i + \sigma_i-i) = (pb_i+\sigma_i-i)$ for all bin sizes. Then $\sigma_i =i$, and the only nonzero term which survives is $\rP(\al,\la) = \rP(\a,\b)$.

\subsection{Combinatorial identities}\label{ss:finrem-White}
The irreducible characters of a finite group $G$
are orthonormal with respect to the inner product
$$\langle \chi, \psi \rangle = \frac{1}{|G|} \sum_{g \in G}  \chi(g)  \psi(g^{-1}),
$$
see e.g.~\cite[$\S$2.3]{Serre}.  Thus, equation~\eqref{eq:charsum} gives
the squared norm of a character $\chi^\la$.

Equation \eqref{eq:RSK} for general finite groups is called \emph{Burnside's identity}
\cite[$\S208$]{Burn} and can be generalized as follows.
For every  partition $\mu=(1^{m_1} \ldots\ell^{m_\ell})\vdash n$ with $m_i$ parts of size~$i$, we have:
\begin{equation}
\label{eq:charsumz}
1^{m_1} m_1! \,  \,  \cdots \, \ell^{m_\ell}\ell! \, = \ \sum_{\la\vdash n} \big(\chi^\la(\mu)\big)^2,
\end{equation}
see e.g.\ \cite[Thm~1.10.3]{Sag}.  When $\mu=(1^n)$ we get~\eqref{eq:RSK}, but in this case finding a natural combinatorial partition of the objects from the LHS to sets of sizes given by the character squares
is unlikely for the same reason as for \eqref{eq:charsum}.

It would be interesting to see if \eqref{eq:charsumz} has a combinatorial interpretation
for \emph{some} classes of~$\mu$.  For example, when $\mu=(n)$, the characters
$\chi^\la(\mu) \in \{0,\pm 1\}$ and there is an easy combinatorial interpretation for the character squares $\big(\chi^\la(\mu)\big)^2$.  More generally, for $\mu=(k^{n/k})$,
all rim hook tableaux in the Murnaghan--Nakayama rule for $\chi^\la(\mu)$
have the same sign, see e.g.\ \cite[$\S$2.7]{JK} and \cite{SW},
so again character squares have a combinatorial interpretation.
These ``equal cycles'' characters also appear in the mysterious identities
in \cite[Thm~3.3]{KK}.  We note that they do not have a combinatorial
proof except for the first identity which coincides with~\eqref{eq:charsumz}.

\subsection{Other values}\label{ss:finrem-other}
As discussed e.g.\ in \cite[$\S$8]{PPV} and \cite{Mil,Pel},
 other values of the character table are of interest as well,
notably the uniqueness and parity of the characters.  The corresponding
complexity problems $\big[\chi^\la(\mu)=1\big]$ and
$\big[\chi^\la(\mu)=0~\text{mod}~2\big]$ are also very interesting
and worth studying.

\subsection*{Acknowledgements}
We are grateful to Swee Hong Chan,
Josh Grochow, Sam Hopkins, Greg Kuperberg and
Alejandro Morales for fruitful discussions.
We thank an anonymous reviewer for helping to improve this paper's presentation.
The first author was partially supported by the DFG grant IK 116/2-1 and the EPSRC grant EP/W014882/1.
The second author was partially supported by the NSF grant CCF-2007891.
The third author was partially supported by the NSF grant CCF-2007652.
For the purpose of open access, the authors have applied a Creative Commons Attribution (CC BY) license to any Author Accepted Manuscript version arising from this submission.


\begin{thebibliography}{BNRW123}

\bibitem[ARY21]{ARY}
A.~Adve, C.~Robichaux and A.~Yong,
An efficient algorithm for deciding vanishing of Schubert
polynomial coefficients, \emph{Adv.\ Math.}~\textbf{383} (2021),
Paper No.~107669, 38~pp.

\bibitem[And76]{And}
G.~E.~Andrews, \emph{The theory of partitions},
Addison-Wesley, Reading, MA, 1976, 255~pp.

\bibitem[AB09]{AB09}
S.~Arora and B.~Barak,
\emph{Computational complexity: a modern approach},
Cambridge Univ.\ Press, Cambride, UK, 2009, 579~pp.

\bibitem[BO05]{BO}
C.~M.~Ballantine and R.~C.~Orellana,
On the Kronecker product $s_{(n-p,p)}\ast s_\lambda$,
\emph{Electron.\ J.\ Combin.}~\textbf{12} (2005), RP~28, 26~pp.

\bibitem[BBP22]{BBP}
C.~Bessenrodt, C.~Bowman and R.~Paget,
The classification of multiplicity-free plethysms of Schur functions,
\emph{Trans.\ AMS}~\textbf{375} (2022), 5151--5194.

\bibitem[Bla17]{Bla17}
J.~Blasiak, Kronecker coefficients for one hook shape,
{\em S{\'e}m.\ Lothar.\ Combin}~\textbf{77} (2017),
Art.~B77c, 40~pp.

\bibitem[BOR09]{BOR}
E.~Briand, R.~Orellana and M.~Rosas,
Reduced Kronecker coefficients and counter-examples to Mulmuley's strong saturation conjecture SH,
\emph{Comput.\ Complexity}~\textbf{18} (2009), 577--600.

\bibitem[BS17]{BuS}
D.~Bump and  A.~Schilling,
\emph{Crystal bases}, World Sci., Hackensack, NJ, 2017, 279~pp.

\bibitem[Bur11]{Burn}
W.~Burnside, \emph{Theory of groups of finite order} (Second ed.),
Cambridge Univ.\ Press, Cambride, UK, 1911, 512~pp.

\bibitem[BI08]{BI-kron}
P.~B\"urgisser and C.~Ikenmeyer,
The complexity of computing Kronecker coefficients,
in \emph{Proc.\ 20th FPSAC} (2008),
DMTCS, Nancy, 2008, 357--368.

\bibitem[BI13]{BI}
P.~B\"urgisser and C.~Ikenmeyer,
Deciding positivity of Littlewood--Richardson coefficients,
\emph{SIAM J.~Discrete Math.}~\textbf{27} (2013), 1639--1681.

\bibitem[CDW12]{CDW}
M.~Christandl, B.~Doran and M.~Walter, Computing multiplicities of
Lie group representations, in \emph{Proc.\ 53-rd FOCS} (2012), IEEE, 639--648.

\bibitem[DNV22]{DNV}
D.~Dauvergne, M.~Nica and B.~Vir\'ag,
RSK in last passage percolation: a unified approach,
\emph{Probab.\ Surv.} \textbf{19} (2022), 65--112.

\bibitem[DM06]{DM06}
J.~A.~De~Loera and T.~B.~McAllister,
On the computation of Clebsch--Gordan coefficients and the dilation effect,
\emph{Experimental Math.}~\textbf{15} (2006), 7--19.

\bibitem[Dia88]{Dia}
P.~Diaconis,  \emph{Group representations in probability and statistics},
IMS, Hayward, CA, 1988, 198~pp.

\bibitem[DIP20]{DIP}
J.~D\"orfler, C.~Ikenmeyer and G.~Panova,
On geometric complexity theory: multiplicity obstructions are stronger
than occurrence obstructions,
\emph{SIAM J.\ Appl.\ Algebra Geom.}~\textbf{4} (2020), 354--376.

\bibitem[EFP11]{EFP}
D.~Ellis,  E.~Friedgut and H.~Pilpel,
Intersecting families of permutations,
\emph{J.~AMS}~\textbf{24} (2011), 649--682.

\bibitem[FGHP99]{FGHP99}
S.~Fenner, F.~Green, S.~Homer and R.~Pruim,
Determining acceptance possibility for a quantum computation is hard for the polynomial hierarchy,
{\em R.\ Soc.\ Lond.\ Proc.\ Ser.~A, Math.\ Phys.\ Eng.\ Sci.}~\textbf{455} (1999),
no.~1991, 3953--3966.

\bibitem[FI20]{FI20}
N.~Fischer and C.~Ikenmeyer,
The computational complexity of plethysm coefficients,
\emph{Comput.\ Complexity}~\textbf{29} (2020), no.~2,
Paper~8, 43~pp.

\bibitem[Fru86]{Fru}
A.~Frumkin,
Theorem about the conjugacy representation of $S_n$,
\emph{Israel J.~Math.}~\textbf{55} (1986), 121--128.

\bibitem[GJ78]{GJ78}
M.~R.~Garey and D.~S.~Johnson,
``Strong'' NP-completeness results: motivation, examples, and implications,
\emph{J.\ Assoc.\ Comput.\ Mach.}~\textbf{25} (1978),  499--508.

\bibitem[GJ79]{GJ79}
M.~Garey and D.~S.~Johnson,
\emph{Computers and Intractability: A Guide to the Theory of \NP-completeness},
Freeman, San Francisco, CA, 1979.

\bibitem[Gre93]{Gre93}
F.~Green, On the power of deterministic reductions to {C}$_=${P},
{\em Math.\ Systems Theory}~\textbf{26} (1993), 215--233.

\bibitem[Hep94]{Hep}
C.~T.~Hepler,
\emph{On the complexity of computing characters of finite groups}, thesis,
University of Calgary, 1994, 117~pp.; available at
\url{https://dspace.ucalgary.ca/handle/1880/45530}

\bibitem[HVW95]{HVW95}
U.~Hertrampf, H.~Vollmer and K.~Wagner,
On the power of number-theoretic operations with respect to counting,
in {\em Proc.\ 10th CCC} (1995), 299--314.

\bibitem[IMW17]{IMW}
C.~Ikenmeyer, K.~D.~Mulmuley and M.~Walter,
On vanishing of Kronecker coefficients,
\emph{Comput.\ Complexity} \textbf{26} (2017), 949--992.

\bibitem[IP22]{IP22}
C.~Ikenmeyer and I.~Pak,
What is in~\#{P} and what is not?, preprint (2022), 82 pp.;
{\tt arXiv:2204.13149}; extended abstract in \emph{Proc.\ 63rd FOCS} (2022), 12~pp.

\bibitem[JK81]{JK}
G.~James and A.~Kerber, \emph{The representation theory of the symmetric group},
Addison-Wesley, Reading, MA, 1981, 510~pp.

\bibitem[KM18]{KM18}
T.~Kahle and M.~Micha{\l}ek,
Obstructions to combinatorial formulas for plethysm,
\emph{Electron.\ J.\ Combin.}~\textbf{25} (2018), no.~1, Paper~1.41, 9~pp.

\bibitem[KP21]{KP}
D.~Kim and P.~Pylyavskyy, Robinson--Schensted correspondence for unit interval orders,
\emph{Selecta Math.}~\textbf{27} (2021), no.~5, Paper No.~97, 66~pp.

\bibitem[KK98]{KK}
A.~Klyachko and E.~Kurtaran,
Some identities and asymptotics for characters of the symmetric group,
\emph{J.\ Algebra}~\textbf{206} (1998), 413--437.

\bibitem[Knu16]{Knu}
A.~Knutson, Schubert calculus and puzzles, in
\emph{Adv.\ Stud.\ Pure Math.}~\textbf{71}, Math.\ Soc.\ Japan, Tokyo, 2016, 185--209.

\bibitem[KT99]{KT1}
A.~Knutson and T.~Tao,
The honeycomb model of $\GL_n(\cc)$ tensor products~I:
Proof of the saturation conjecture,
{\em J.~AMS}~\textbf{12} (1999), 1055--1090.

\bibitem[KZ17]{KZ}
A.~Knutson and P.~Zinn-Justin,
Schubert puzzles and integrability I: invariant trilinear forms, preprint (2017), 51~pp.;
{\tt arXiv:1706.10019}.

\bibitem[KW01]{KW}
W.~Kra\'skiewicz and J.~Weyman,
Algebra of coinvariants and the action of a Coxeter element,
in \emph{Bayreuth.\ Math.\ Schr.}~\textbf{63} (2001), 265--284.

\bibitem[Man01]{Man}
L.~Manivel,
\emph{Symmetric functions, {S}chubert polynomials and degeneracyloci},
SMF/AMS, Providence, RI, 2001.

\bibitem[MPP14]{MPP}
K.~M\'esz\'aros, G.~Panova and A.~Postnikov,
Schur times Schubert via the Fomin--Kirillov algebra,
\emph{Electron.\ J.\ Combin.}~\textbf{21} (2014), no.~1,
Paper~1.39, 22~pp.

\bibitem[Mil19]{Mil}
A.~R.~Miller,
On parity and characters of symmetric groups,
\emph{J.\ Combin.\ Theory, Ser.~A}~\textbf{162} (2019), 231--240.

\bibitem[MRS08]{MRS}
C.~Moore, A.~Russell and L.~J.~Schulman,
The symmetric group defies strong Fourier sampling,
\emph{SIAM J.\ Comput.}~\textbf{37} (2008), 1842--1864.

\bibitem[MQ17]{MQ}
P.~Mukhopadhyay and Y.~Qiao,
Sparse multivariate polynomial interpolation on the basis of Schubert polynomials,
\emph{Comput.\ Complexity}~\textbf{26} (2017), 881--909.

\bibitem[Mul09]{Mul09}
K.~D.~Mulmuley,  Geometric Complexity Theory~VI: The flip via saturated
and positive integer programming in representation theory and algebraic geometry,
preprint (2009), 139~pp.;  {\tt arXiv:0704.0229v4}.

\bibitem[MNS12]{MNS}
K.~D.~Mulmuley, H.~Narayanan and M.~Sohoni,
Geometric complexity theory~III.  On deciding nonvanishing of a
Littlewood--Richardson coefficient, \emph{J.\ Algebraic Combin.}~\textbf{36}
(2012), 103--110.

\bibitem[Nar06]{Nar}
H.~Narayanan,
On the complexity of computing Kostka numbers and Littlewood--Richardson coefficients,
\emph{J.~Algebraic Combin.}~\textbf{24} (2006), 347--354.

\bibitem[O'Con03]{OC}
N.~O'Connell,
Conditioned random walks and the RSK correspondence. Random matrix theory,
\emph{J.\ Phys.~A}~\textbf{36} (2003), 3049--3066.

\bibitem[OW03]{OW}
R.~O'Donnell and J.~Wright,
Efficient quantum tomography~II, in
\emph{Proc.\ 49th STOC} (2017), 962--974.

\bibitem[Pak18]{Pak18}
I.~Pak, Complexity problems in enumerative combinatorics, in \emph{Proc.\ ICM Rio de Janeiro},
Vol.~IV, World Sci., Hackensack, NJ, 2018, 3153--3180; an expanded version of the paper
is at  {\tt arXiv:1803.06636}.

\bibitem[Pak19]{Pak19}
I.~Pak, Combinatorial inequalities, \emph{Notices AMS}~\textbf{66} (2019), 1109--1112;
an expanded version of the paper is available at
\url{https://www.math.ucla.edu/~pak/papers/full-story1.pdf}

\bibitem[PP13]{PP_strict}
I.~Pak and G.~Panova, Strict unimodality of $q$-binomial coefficients,
\emph{C.\/R.\/ Math.\/ Acad.\/ Sci.\/ Paris} \textbf{351} (2013), 415--418.

\bibitem[PP17]{PP_comp}
I.~Pak and G.~Panova, On the complexity of computing Kronecker coefficients,
\emph{Comput.\ Complexity}~\textbf{26} (2017), 1--36.

\bibitem[PPV16]{PPV}
I.~Pak, G.~Panova and E.~Vallejo,
Kronecker products, characters, partitions, and the tensor square conjectures,
\emph{Adv.\ Math.}~\textbf{288} (2016), 702--731.


\bibitem[Pap94]{Pap94}
C.~H.~Papadimitriou,
\emph{Computational Complexity},
Addison-Wesley, Reading, MA, 1994.

\bibitem[Pau95]{Pau}
R.~Pauncz, The symmetric group in quantum chemistry,
CRC Press, Boca Raton, FL, 1995, 330~pp.

\bibitem[Pel20]{Pel}
S.~Peluse,
On even entries in the character table of the symmetric group,
preprint (2020), 12~pp.; {\tt arXiv:2007.06652}.

\bibitem[PS09]{PS}
A.~Postnikov and R.~P.~Stanley, Chains in the Bruhat order,
\emph{J.\ Algebraic Combin.}~\textbf{29} (2009), 133--174.

\bibitem[RW94]{RW}
J.~B.~Remmel and T.~Whitehead,
On the Kronecker product of Schur functions of two row shapes,
\emph{Bull.\ Belg.\ Math.\ Soc.\ Simon Stevin}~\textbf{1} (1994), 649--683.

\bibitem[Rom15]{Rom}
D.~Romik,
\emph{The surprising mathematics of longest increasing subsequences},
Cambridge Univ.~Press, New York, 2015, 353~pp.

\bibitem[Sag01]{Sag}
B.~E.~Sagan, {\em The symmetric group}, Springer, New York, 2001.

\bibitem[Sch78]{Sch78}
T.~J.~Schaefer, The complexity of satisfiability problems,
in \emph{Proc.\ 10th STOC} (1978), 216--226.

\bibitem[Ser77]{Serre}
J.-P.~Serre, Linear representations of finite groups, Springer, New York, 1977, 170~pp.

\bibitem[Sol61]{Sol}
L.~Solomon, On the sum of the elements in the character table of a finite
group, \emph{Proc.\ AMS}~\textbf{12} (1961), 962--963.

\bibitem[Sta84]{Sta84}
R.~P.~Stanley,
The stable behavior of some characters of {${\rm SL}(n,\cc)$},
\emph{Linear and Multilinear Algebra}~\textbf{16} (1984), 3--27.

\bibitem[Sta99]{EC2}
R.~P.~Stanley, {\em Enumerative Combinatorics}, vol.~1 (second ed.)
and vol.~2, Cambridge Univ.~Press, 2012 and~1999.

\bibitem[Sta00]{Sta00}
R.~P.~Stanley,
Positivity problems and conjectures in algebraic combinatorics,
in {\em Mathematics: Frontiers and Perspectives},
AMS, Providence, RI, 2000, 295--319.

\bibitem[SW85]{SW}
D.~W.~Stanton and D.~E.~White,
A Schensted algorithm for rim hook tableaux,
\emph{J.\ Combin.\ Theory, Ser.~A} \textbf{40} (1985), 211--247.

\bibitem[Ste94]{Ste}
S.~Sternberg,
\emph{Group theory and physics},
Cambridge Univ.\ Press, Cambridge, 1994, 429~pp.

\bibitem[Sun18]{Sun}
S.~Sundaram,
The conjugacy action of $S_n$ and modules induced from centralisers,
\emph{J.\ Algebraic Combin.} \textbf{48} (2018), 179--225.

\bibitem[Tar91]{Tar91}
J.~Tarui, Randomized polynomials, threshold circuits, and the polynomial hierarchy,
in \emph{Proc.\ 8th STACS} (1991), 238--250.

\bibitem[TY08]{TY}
H.~Thomas and A.~Yong,
An $S_3$-symmetric Littlewood--Richardson rule,
\emph{Math.\ Res.\ Lett.}~\textbf{15} (2008), 1027--1037.

\bibitem[Toda89]{Tod89}
S.~Toda,  On the computational power of $\PP$ and $\oP$,
in {\em Proc.\ 30th FOCS} (1989), 514--519.

\bibitem[Toda91]{Tod91}
S.~Toda, $\PP$ is as hard as the polynomial-time hierarchy,
 {\em SIAM J.\ Comput.}~\textbf{20} (1991), 865--877.

\bibitem[Tur36]{Tur36}
A.~M.~Turing, On computable numbers, with an application to the Entscheidungsproblem,
 {\em J.\ of Math.}~\textbf{58} (1936), 345--363.

\bibitem[Wilf82]{Wilf}
H.~S.~Wilf, What is an answer?, \emph{Amer.\ Math.\ Monthly}~\textbf{89}
(1982), no.~5, 289--292.

\bibitem[Whi83]{White1}
D.~White, A bijection proving orthogonality of the characters of~$S_n$,
\emph{Adv.\ Math.}~\textbf{50} (1983), 160--186.

\bibitem[Whi85]{White2}
D.~White,
Orthogonality of the characters of~$S_n$,
\emph{J.\ Combin.\ Theory, Ser.~A}~\textbf{40} (1985), 265--275.

\bibitem[You20]{You20}
N.~Young,
Why is the reduction from 3-SAT to 3-dimensional Matching Parsimonious?,
{\em CS Theory Stack Exchange} answer (Aug.~31, 2020);
\url{https://cstheory.stackexchange.com/q/47491/16704}



\end{thebibliography}
\end{document}